\documentclass[10pt]{article}
  \setlength{\textwidth}{4.5in}
  \setlength{\textheight}{7.125in}
  \usepackage{amsmath,amssymb}
  \usepackage{cite}
  \usepackage{latexsym}
  \usepackage{tikz}
  \usepackage{setspace}

  \newcommand{\cP}{\mathcal{P}}
  \newcommand{\cQ}{\mathcal{Q}}

  \newcommand{\hs}{\hspace*{\parindent}}

  \newcommand{\qed}{\hspace*{\fill} $\Box$\\}

  \newtheorem{theo}{\bfseries \hs Theorem}[section]
  
  \newtheorem{prop}[theo]{\bfseries \hs Proposition}
  \newtheorem{lemma}[theo]{\bfseries \hs Lemma}
  \newtheorem{corol}[theo]{\bfseries \hs Corollary}

  \numberwithin{equation}{section} 

 \begin{document}

\title{Nonmedian Direct Products of Graphs with Loops}

\author{Kristi Clark\thanks{College of Information and Mathematical Sciences, Clayton State University, (kclark14@student.clayton.edu)} \hbox{ and} Elliot Krop\thanks{College of Information and Mathematical Sciences, Clayton State University, (ElliotKrop@clayton.edu).}}
   \date{\today}

\maketitle

\begin {abstract}
A \emph{median graph} is a connected graph in which, for every three vertices, there exists a unique vertex $m$ lying on the geodesic between any two of the given vertices. We show that the only median graphs of the direct product $G\times H$ are formed when $G=P_k$, for any integer $k\geq 3$ and $H=P_l$, for any integer $l\geq 2$, with a loop at an end vertex, where the direct product is taken over all connected graphs $G$ on at least three vertices or at least two vertices with at least one loop, and connected graphs $H$ with at least one loop. 
\\[\baselineskip] 2000 Mathematics Subject
      Classification: 05C75, 05C76, 05C12
\\[\baselineskip]
      Keywords: product graphs, direct product, median graphs
\end {abstract}

 \section{Introduction}
 
 For basic graph theoretic notation and definition see Diestel \cite{Diest}.  Classification of median graphs and the study of graph products have been active fields of study for the last several decades, see \cite{Bandelt}, \cite{KlavzarMulder}, \cite{BJKZ}, as well as the book \cite{ImrichKlavzar}. 
 
 In \cite{BJKZ}, the authors categorized whether components of direct product graphs are median. Along with some simpler cases, they showed that if $G$ is $P_3$ and $H$ is an even tree (a tree with even distance between every two vertices of degree three or more), then one component of $G \times H$ is a median graph. The classification of which product graphs $P_2 \times H$ are median is only partly understood.
 
 The graphs in question above have no loops. As for the case with loops, let $T_n$, the \emph{total graph}, be the complete graph on $n$ vertices with a loop at each vertex. Munarini showed in \cite{Munarini}, that $G \times T_n$ is median if and only if $G=P_2$ and $n=2$.
 
 In this paper, we consider the rest of the cases, namely, direct products of graphs where some factor graph has at least one loop.\\
 
 We say two vertices $x$ and $y$ are adjacent if there exists an edge $(x,y)$ and in this case we write $x \sim y$. If $x$ and $y$ are not adjacent we write $x \not \sim y$. By $N(x,y)$ we mean the \emph{deleted neighborhood of x and y}, that is, the set of vertices $\{v: v\sim x, v\sim y, v\neq x, v\neq y\}$. We define the distance between vertices $x$ and $y$ in $G$ as the length of a shortest path from $x$ to $y$ in $G$, and denote it $d_G(x,y)$. The \emph{direct product} of two graphs $G$ and $H$, written as $G\times H$ is the graph with vertices $V(G\times H)=V(G)\times V(H)$ where two vertices $(v_1,h_1)$ and $(v_2,h_2)$ are adjacent if and only if $v_1$ and $v_2$ are adjacent in $G$ as well as $h_1$ and $h_2$ are adjacent in $H$. A \emph{median graph} is a connected graph in which for every three vertices, there exists a unique vertex $m$ known as the \emph{median}, which lies on the geodesic between any two of the given vertices. A \emph{cartesian product} of two graphs $G$ and $H$, written as $G\Box H$ is the graph with vertices $V(G\times H)=V(G)\times V(H)$ where two vertices $(v_1,h_1)$, $(v_2,h_2)$ are adjacent if $v_1=v_2$ in $G$ and $h_1 \sim h_2$ in $H$, or $v_1 \sim v_2$ in $G$ and $g_1=g_2$ in $G$. For any positive integers $n$ and $m$, call $P_n \Box P_m$ the \emph{complete grid graph}. A \emph{grid graph} is a subgraph of a complete grid graph.
 
 A subgraph $H$ of $G$ is called \emph{isometric} if $d_H(x,y)=d_G(x,y)$ for all $x,y \in V(H)$. A subgraph $H$ of $G$ is called \emph{convex} if it is connected and for every pair of vertices $x,y\in V(H)$, every isometric path from $x$ to $y$ exists in $H$.\\
 
 In this paper, we consider the direct product $G\times H$, where $G$ is either a connected graph on at least three vertices or a connected graph with at least one loop, and $H$ is a connected graph on at least two vertices with at least one loop. We show that the only median graphs of such a direct product are formed when $G=P_k$ and $H = P_l$ with a loop at an end vertex, for any integers $k\geq 3, l\geq 2$.

 \section{Preliminaries}
 
 The first fact is folklore.
 
 \begin{prop}\label{findoddcycle}
 For the paths $\cP_0, |\cP_0|=k$, and $\cQ_0, |\cQ_0|=l$, beginning at vertex $u$ and ending at vertex $v$, if $k$ and $l$ are of opposite parity, then $\cP_0\cup \cQ_0$ contains an odd cycle.
 \end{prop}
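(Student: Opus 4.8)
The plan is to argue by contradiction using the standard characterization of bipartite graphs as precisely those graphs that contain no odd cycle. Suppose $\cP_0\cup\cQ_0$ contains no odd cycle; then this graph is bipartite, so its vertices admit a proper $2$-coloring, say by black and white, in which adjacent vertices receive distinct colors. Fix the color assigned to $u$. The virtue of this route is that it is insensitive to how much the two paths overlap: whether $\cP_0$ and $\cQ_0$ are internally disjoint or share edges, their union is a well-defined graph to which the bipartite dichotomy applies directly.

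First I would record the elementary observation that along any walk in a properly $2$-colored graph the colors alternate, so the color of the terminal vertex is determined by the color of the initial vertex together with the parity of the number of edges traversed. Since $\cP_0$ and $\cQ_0$ both run from $u$ to $v$ inside $\cP_0\cup\cQ_0$, the length (number of edges) of each has the same parity, namely the unique parity that carries the color of $u$ to the color of $v$.

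The next step is to translate the hypothesis on $k$ and $l$ into a statement about these lengths. Whether $|\cdot|$ is read as a count of vertices or of edges, the lengths of $\cP_0$ and $\cQ_0$ differ from $k$ and $l$ by the same additive constant, so $k$ and $l$ having opposite parity forces the two path lengths to have opposite parity. This contradicts the conclusion of the previous paragraph that the two lengths share a parity, and the contradiction establishes that $\cP_0\cup\cQ_0$ must contain an odd cycle.

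I expect no serious obstacle, as the result is folklore; the only point demanding a little care is the bookkeeping in the third step, fixing once and for all whether $|\cdot|$ counts vertices or edges so that the parity correspondence between $(k,l)$ and the path lengths is stated correctly. An alternative and equally short route avoids bipartiteness: concatenating $\cP_0$ with the reversal of $\cQ_0$ yields a closed walk whose length is the sum of the two path lengths, hence odd under the hypothesis, and a routine induction on walk length (splitting at any repeated vertex into two shorter closed walks, one of which is again odd) shows that every odd closed walk contains an odd cycle, all of whose edges lie in $\cP_0\cup\cQ_0$.
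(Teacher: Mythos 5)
The paper states this proposition as folklore and supplies no proof at all, so there is nothing to compare your argument against; judged on its own, your proof is correct. The bipartiteness route is the standard one: in a proper $2$-coloring of $\cP_0\cup\cQ_0$ every $u$--$v$ walk has length of the parity forced by the colors of $u$ and $v$, so two $u$--$v$ paths of opposite-parity lengths cannot coexist in an odd-cycle-free graph, and your remark that the vertex-versus-edge reading of $|\cdot|$ shifts both counts by the same constant disposes of the only bookkeeping issue. Your alternative via odd closed walks is equally sound, and is worth keeping in mind here since the paper allows loops (a loop being an odd cycle of length one, the base case of that induction).
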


 
 

 We state a well known lemma which may be found in \cite{ImrichKlavzar}.
 
 
 
 
 \begin{lemma} \label{bipartite}
 Median graphs are bipartite.

 \end{lemma}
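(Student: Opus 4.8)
The plan is to prove the contrapositive in its usual form: a graph containing an odd cycle cannot be a median graph. Since a graph is bipartite precisely when it has no odd cycle, I assume for contradiction that a median graph $G$ contains an odd cycle, and I choose one of minimum length, say $C = c_0 c_1 \cdots c_{2k} c_0$ of length $2k+1$ with $k \ge 1$. The contradiction will come from producing a triple of vertices on $C$ that has no median.

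The first key step is to show that this shortest odd cycle $C$ is \emph{isometric} in $G$, meaning that $d_G(c_i,c_j)$ equals the length of the shorter of the two arcs of $C$ joining $c_i$ and $c_j$. Suppose this fails: then for some pair $u,v$ on $C$ there is a path $\cP_0$ in $G$ from $u$ to $v$ whose length $d_G(u,v)$ is strictly less than the shorter arc. The two arcs of $C$ between $u$ and $v$ have lengths summing to $2k+1$, hence are of opposite parity, so exactly one of them has parity opposite to that of $\cP_0$. Pairing $\cP_0$ with that arc and invoking Proposition \ref{findoddcycle}, their union contains an odd cycle; a short length count shows this cycle is shorter than $2k+1$ (the total length of $\cP_0$ together with the shorter arc is below $2k$, and together with the longer arc is below $2k+1$), contradicting the minimality of $C$. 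Hence $C$ is isometric.

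The second step is to exhibit the bad triple. I would take $x = c_0$, $y = c_1$, and the near-antipodal vertex $z = c_{k+1}$. Because $C$ is isometric, $d_G(c_0,c_{k+1}) = d_G(c_1,c_{k+1}) = k$, each distance realized by one of the two arcs, while $d_G(c_0,c_1) = 1$. Any median $m$ of this triple must satisfy $d_G(c_0,m) + d_G(m,c_1) = 1$, which forces $m \in \{c_0,c_1\}$. But $m = c_0$ would require $d_G(c_1,c_0) + d_G(c_0,c_{k+1}) = d_G(c_1,c_{k+1})$, i.e.\ $1 + k = k$, and symmetrically $m = c_1$ would require $1 + k = k$; both are impossible. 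Thus $\{c_0,c_1,c_{k+1}\}$ has no median at all, contradicting that $G$ is a median graph, and the proof is complete.

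The step I expect to be the main obstacle is establishing the isometry of the shortest odd cycle: one must choose the correct arc to pair with the putative shortcut so that the resulting odd closed walk is strictly shorter than $2k+1$, allowing Proposition \ref{findoddcycle} to deliver a genuinely shorter odd cycle. Once isometry is secured, the distances along $C$ are pinned down exactly and the nonexistence of a median for $\{c_0,c_1,c_{k+1}\}$ reduces to a routine parity check.
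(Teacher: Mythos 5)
Your proof is correct. Note first that the paper itself does not prove this lemma --- it is quoted as a known result from \cite{ImrichKlavzar} --- so there is no in-paper argument to compare against; what you have supplied is a self-contained proof, and it holds up. The isometry step is sound: if a geodesic $\cP_0$ between two vertices of a shortest odd cycle $C$ of length $2k+1$ is shorter than the shorter arc (length $a$, the longer arc having length $b=2k+1-a$), then pairing $\cP_0$ with whichever arc has opposite parity and applying Proposition \ref{findoddcycle} yields an odd cycle of length at most $|\cP_0|+a<2a\le 2k$ or at most $|\cP_0|+b<a+b=2k+1$, in either case strictly shorter than $C$. Your choice of the balanced near-antipode $c_{k+1}$ is also the right one (with $c_k$ instead, $c_1$ would actually be a median of the triple), and the resulting parity contradiction $1+k=k$ is airtight, including for triangles ($k=1$). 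One small caveat worth recording: since this paper allows loops, a loop is itself an odd closed walk of length $1$, and a graph such as $E_2$ satisfies the paper's median definition while failing to be bipartite in the strict sense; your argument, which starts from a shortest odd cycle of length $2k+1$ with $k\ge 1$, proves exactly the statement the paper actually uses, namely that a median graph contains no odd cycle of length at least $3$.
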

 
 
 The following subgraph is excluded from median graphs by definition.
 
 \begin{prop}\label{K23}
 
 If $G$ contains $K_{2,3}$ as an induced subgraph, then $G$ is not median.
 
 \end{prop}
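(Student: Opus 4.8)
The plan is to exhibit a single triple of vertices of $G$ that already admits two distinct medians, which contradicts the uniqueness demanded by the definition of a median graph. Write the bipartition of the induced $K_{2,3}$ as $\{a_1,a_2\}$ and $\{b_1,b_2,b_3\}$, where every $a_i$ is adjacent to every $b_j$ and these are the only edges among the five vertices. The natural triple to test is $\{b_1,b_2,b_3\}$, the side of size three, since both $a_1$ and $a_2$ are common neighbors of all of them; in the notation of the preliminaries, $a_1,a_2\in N(b_i,b_j)$ for each pair $i\neq j$.

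First I would pin down the relevant distances in $G$. Because the copy of $K_{2,3}$ is \emph{induced}, there is no edge $b_ib_j$ in $G$, so $d_G(b_i,b_j)\geq 2$; and the length-two path $b_i\,a_1\,b_j$ witnesses $d_G(b_i,b_j)\leq 2$, giving $d_G(b_i,b_j)=2$ for all $i\neq j$. This is the step where the induced hypothesis is essential: without it an extra edge among the $b_j$ could shorten a distance and break the count that follows.

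Next I would characterize any median $m$ of $\{b_1,b_2,b_3\}$. Lying on a geodesic between each pair forces the three additive equations $d_G(b_i,m)+d_G(m,b_j)=2$. Summing them yields $d_G(b_1,m)+d_G(b_2,m)+d_G(b_3,m)=3$. If $m=b_k$ for some $k$, then the equation for the pair not containing $b_k$ would require $2+2=2$, a contradiction; hence $m\notin\{b_1,b_2,b_3\}$, each of the three terms is at least $1$, and so every distance equals $1$. Thus any median of $\{b_1,b_2,b_3\}$ is necessarily a common neighbor of all three $b_j$.

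Finally, both $a_1$ and $a_2$ satisfy this requirement: each is adjacent to all three $b_j$, so $d_G(a_k,b_i)=1$ and $1+1=2=d_G(b_i,b_j)$ places $a_k$ on a geodesic between every pair. Hence $a_1$ and $a_2$ are two distinct medians of the triple $\{b_1,b_2,b_3\}$ (and any further common neighbors would only add more), so no unique median exists and $G$ is not median. The only point requiring care is ruling out $m=b_k$ and thereby forcing the distance profile $(1,1,1)$; everything else is immediate from the adjacency structure of $K_{2,3}$.
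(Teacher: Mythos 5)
Your proof is correct and is exactly the argument the paper leaves implicit: the paper simply asserts that $K_{2,3}$ is ``excluded from median graphs by definition,'' meaning the triple $b_1,b_2,b_3$ has the two medians $a_1,a_2$, which is precisely what you verify. Your careful ruling out of $m=b_k$ and the use of inducedness to fix $d_G(b_i,b_j)=2$ fill in the details cleanly.
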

 
 More generally, by using the the above lemma that median graphs are bipartite, we have
 
 \begin{prop}\label{K23sub}
 If $K_{2,3}$ is a subgraph of $G$ (not necessarily induced), then $G$ cannot be a median graph.
 \end{prop}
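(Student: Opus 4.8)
The plan is to argue by contradiction, leveraging the two results already in hand: that median graphs are bipartite (Lemma~\ref{bipartite}) and that an \emph{induced} $K_{2,3}$ obstructs the median property (Proposition~\ref{K23}). Suppose $G$ were median and contained a copy of $K_{2,3}$ as a (not necessarily induced) subgraph. The heart of the argument is to promote this copy to an induced one, at which point Proposition~\ref{K23} delivers the contradiction directly.

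To carry this out, I would first fix the bipartition $\{a,b\}$ and $\{x,y,z\}$ of the $K_{2,3}$ copy, so that all six cross edges $a\sim x,\ a\sim y,\ \dots,\ b\sim z$ are present in $G$. Since $G$ is median, Lemma~\ref{bipartite} gives that $G$ is bipartite. Because $K_{2,3}$ is connected, its own bipartition is forced to align with the restriction of $G$'s bipartition to these five vertices; hence $\{a,b\}$ and $\{x,y,z\}$ lie on opposite sides of the two colour classes of $G$.

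The key step is then to rule out every edge that could spoil inducedness. The only edges among $\{a,b,x,y,z\}$ not already belonging to the $K_{2,3}$ are the within-part edge $a\sim b$ and the three edges among $x,y,z$. Each such edge would close a triangle using a single vertex from the opposite part: for instance, $a\sim b$ together with the path $a\sim x\sim b$ yields the odd cycle on $a,x,b$, and an edge $x\sim y$ together with $x\sim a\sim y$ yields the triangle on $x,a,y$. Since a bipartite graph contains no odd cycle, none of these extra edges can occur, so the five vertices induce exactly $K_{2,3}$ in $G$. Proposition~\ref{K23} now applies, contradicting that $G$ is median.

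The argument is short, and there is no genuine computational obstacle; the only point demanding care is the observation that bipartiteness of $G$ automatically forces any copy of $K_{2,3}$ to be in fact induced, which is precisely what upgrades the weaker hypothesis of Proposition~\ref{K23} into the stronger conclusion claimed here.
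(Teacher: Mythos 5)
Your argument is correct and is exactly the one the paper intends: the paper gives no written proof beyond the remark that bipartiteness (Lemma~\ref{bipartite}) upgrades Proposition~\ref{K23}, and your proposal simply fills in the details of why the extra edges would create triangles and hence force the copy of $K_{2,3}$ to be induced. No issues.
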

 
 The next observation allows us to exclude the case where both $G$ and $H$ have loops.
 
 \begin{prop} \label{loops in G and H}
 If $G$ and $H$ are connected graphs on at least two vertices and both $G$ and $H$ contain at least one loop, then $G\times H$ is not a median graph.
 
 \end{prop}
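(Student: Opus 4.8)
The plan is to invoke Lemma~\ref{bipartite}, which tells us that every median graph is bipartite, and then to show that the hypotheses force $G\times H$ to contain an odd cycle. Since bipartite graphs contain no odd cycle, exhibiting one anywhere in $G\times H$ immediately rules out the median property.

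First I would fix a vertex $a\in V(G)$ carrying a loop (so that $a\sim a$) and a vertex $b\in V(H)$ carrying a loop (so that $b\sim b$); these exist by hypothesis. By the adjacency rule for the direct product, $(a,b)$ is then adjacent to itself, i.e.\ $G\times H$ already carries a loop at $(a,b)$, which is an odd closed walk and so obstructs bipartiteness. To upgrade this into a genuine cycle of length at least three, and thereby avoid any dependence on whether a loop is counted as an odd cycle, I would use connectivity: since $G$ is connected on at least two vertices, $a$ has a neighbor $a'\neq a$, and likewise $b$ has a neighbor $b'\neq b$ in $H$. A direct check of the adjacency rule then shows that the three distinct vertices $(a,b)$, $(a',b)$, and $(a,b')$ are pairwise adjacent: each pair uses the loop at $a$ or at $b$ in one coordinate and the edge $a\sim a'$ or $b\sim b'$ in the other. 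Hence these vertices induce a triangle in $G\times H$.

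A triangle is an odd cycle, so the component of $G\times H$ containing $(a,b)$ is not bipartite; by Lemma~\ref{bipartite} it cannot be median, and therefore $G\times H$ is not a median graph. I expect no serious obstacle here, as the whole argument reduces to a short adjacency verification. The only points requiring care are confirming that the neighbors $a'$ and $b'$ exist, which is precisely where the assumptions that $G$ and $H$ are connected on at least two vertices enter, and checking that the three vertices of the triangle are distinct; both are immediate.
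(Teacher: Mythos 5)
Your argument is correct and is essentially identical to the paper's proof: both exhibit the triangle on $(a,b)$, $(a',b)$, $(a,b')$ (the paper's $(u,x)$, $(v,x)$, $(u,y)$) using the loops at $a$ and $b$ together with neighbors supplied by connectivity, and then invoke Lemma~\ref{bipartite}. No gaps.
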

 
  \begin{proof}
  Let $u,v$ be adjacent vertices in $G$ and $x,y$ be adjacent vertices in $H$, and let both $u$ and $x$ have loops. Notice that $G\times H$ contains the triangle $(u,x),(v,x),(u,y)$. Since median graphs are bipartite by Lemma \ref{bipartite}, this means that $G \times H$ is not median.
    
   \begin{tikzpicture}
 
 \filldraw [black]
(0,1) circle (2pt)
(2,1) circle (2pt);

\draw (0,1) .. controls (.5,1.5) and(-.5,1.5) .. (0,1);
\draw (0,1) -- (2,1);
\coordinate[label=left:$u$] (u) at (0,1);
\coordinate[label=right:$v$] (v) at (2,1);
 
\node at (3.5,1)
{$\times$};
 
 \filldraw [black]
(5,0) circle (2pt)
(5,2) circle (2pt);

\draw (5,2) .. controls (5.5,2.5) and(5.5,1.5) .. (5,2);
\draw (5,0) -- (5,2);

\coordinate[label=left:$y$] (y) at (5,0);
\coordinate[label=left:$x$] (x) at (5,2);

\node at (6.5,1)
{$=$};

\filldraw [black]
(8,0) circle (2pt)
(8,2) circle (2pt)
(10,0) circle (2pt)
(10,2) circle (2pt);

\draw (8,2) --(10,2);
\draw (8,2) --(10,0);
\draw (10,2) --(8,0);
\draw (8,0) -- (8,2);

\node at (7.4,2) {$(u,x)$};
\node at (10,2.4) {$(v,x)$};
\node at (7.4,0) {$(u,y)$};

\end{tikzpicture}

\qed
\end{proof}
  
  \vspace{.2 in}
  
   Next, we define a few graphs.
 
\begin{itemize}
 \item Let $H_1$ be the graph on the vertex set $x,y,z$ with edges \linebreak $\{(x,y), (y,y), (y,z)\}$.
 \item Let $H_2$ be the graph on the vertex set $x,y$ with edges \linebreak $\{(x,x), (x,y), (y,y)\}$.
 \item Let $H_3$ be the graph on the vertex set $x,y,z$ with edges \linebreak
 $\{(x,x), (x,y), (y,z), (z,z)\}$.
 \item For any integer $k\geq 4$ let $H'_k$ be the graph on the vertex set $x_1, \dots, x_k$ \\with edges $\{(x_1,x_1), (x_1,x_2), (x_2,x_3), \dots, (x_{k-1},x_k), (x_k,x_k)\}$.
 \item For any integer $k\geq 3$ let $D_k$ be the graph on the vertex set $x_1, \dots, x_k$ \\with edges $\{(x_1,x_1), (x_1,x_2), (x_2,x_3), \dots, (x_{k-1},x_k), (x_k,x_1)\}$
 \item For any integer $k\geq 2$ let $E_k$ be the graph on the vertex set $x_1, \dots, x_k$ \\with edges $\{(x_1,x_1), (x_1,x_2), (x_2,x_3), \dots, (x_{k-1},x_k)\}$
 \end{itemize}

 \begin{tikzpicture}
\filldraw [black] 
(0,0) circle (2pt)
(0,1) circle (2pt)
(0,2) circle (2pt);

\draw (0,1) .. controls (.5,1.5) and (.5,.5) .. (0,1);
\draw (0,0) -- (0,2);

\filldraw [black]
(2,0) circle (2pt)
(2,2) circle (2pt);

\draw (2,0) .. controls (2.5,.5) and (2.5,-.5) .. (2,0);
\draw (2,2) .. controls (2.5,2.5) and (2.5,1.5) .. (2,2);
\draw (2,0) -- (2,2);

\filldraw [black]
(4,0) circle (2pt)
(4,1) circle (2pt)
(4,2) circle (2pt);

\draw (4,0) .. controls (4.5,.5) and (4.5,-.5) .. (4,0);
\draw (4,2) .. controls (4.5,2.5) and (4.5,1.5) .. (4,2);
\draw (4,0) -- (4,1) -- (4,2);

\filldraw [black]
(6,0) circle (2pt)
(6,1.5) circle (2pt)
(6,2) circle (2pt)
(6,1.125) circle (.5pt)
(6,1) circle (.5pt)
(6,.85) circle (.5pt);

\draw (6,2) .. controls (6.5,2.5) and (6.5,1.5) .. (6,2);
\draw (6,0) .. controls (6.5,.5) and (6.5,-.5) .. (6,0);
\draw (6,2) -- (6,1.25);
\draw (6,0) -- (6,.5);

\coordinate[label=left:$x_k$] (k) at (6,0);
\coordinate[label=left:$x_1$] (1) at (6,2);

\filldraw [black]
(8,0) circle (2pt)
(8,1.5) circle (2pt)
(8,2) circle (2pt)
(8,1.125) circle (.5pt)
(8,1) circle (.5pt)
(8,.85) circle (.5pt);

\draw (8,2) .. controls (8.5,2.5) and (8.5,1.5) .. (8,2);
\draw (8,0) .. controls (7.5,1) .. (8,2);
\draw (8,2) -- (8,1.25);
\draw (8,0) -- (8,.5);

\filldraw [black]
(10,0) circle (2pt)
(10,1.5) circle (2pt)
(10,2) circle (2pt)
(10,1.125) circle (.5pt)
(10,1) circle (.5pt)
(10,.85) circle (.5pt);

\draw (10,2) .. controls (10.5,2.5) and (10.5,1.5) .. (10,2);
\draw (10,2) -- (10,1.25);
\draw (10,0) -- (10,.5);

\node at (0,-1)
{$H_1$};
\node at (2,-1)
{$H_2$};
\node at (4,-1)
{$H_3$};
\node at (6,-1)
{$H'_k$};
\node at (8,-1)
{$D_k$};
\node at (10,-1)
{$E_k$};

\end{tikzpicture}

  Finally, we state some elementary but essential facts.

 \begin{prop} \label{odd cycle}
 If $G$ and $H$ are connected graphs on at least two vertices, G contains an odd cycle and $H$ contains at least one loop, then $G \times H$ is not a median graph.
 
 \end{prop}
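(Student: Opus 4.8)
The plan is to exhibit an odd cycle in $G\times H$ and then invoke Lemma~\ref{bipartite}, which says that median graphs are bipartite. Since a graph containing an odd cycle cannot be bipartite, producing such a cycle immediately shows that $G\times H$ is not median. Note that this argument does not even require the product to be connected, so no separate connectivity discussion is needed.

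To build the cycle, I would first use the hypothesis that $G$ contains an odd cycle to fix distinct vertices $v_0, v_1, \dots, v_{2k}$ of $G$ with $v_i \sim v_{i+1}$ for each $i$ (indices taken modulo $2k+1$), that is, an odd cycle of length $2k+1$. Next I would use the hypothesis that $H$ has a loop to fix a vertex $x$ of $H$ with $x\sim x$. The key observation is that the loop allows the odd cycle of $G$ to lift to $G\times H$ while the second coordinate stays frozen at $x$: consider the vertices $(v_0, x), (v_1, x), \dots, (v_{2k}, x)$.

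I would then verify that consecutive vertices are adjacent in the direct product. By definition $(v_i, x) \sim (v_{i+1}, x)$ precisely when $v_i \sim v_{i+1}$ in $G$ and $x \sim x$ in $H$; the first condition holds because the $v_i$ trace a cycle, and the second holds because $x$ carries a loop. Since the first coordinates $v_0, \dots, v_{2k}$ are pairwise distinct, the lifted vertices are distinct as well, so this closed walk of length $2k+1$ is a genuine odd cycle in $G\times H$. Hence $G\times H$ is not bipartite and, by Lemma~\ref{bipartite}, not median.

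There is essentially no hard step here; the only points requiring care are confirming that the lifted walk is a bona fide cycle (which follows from the distinctness of the $v_i$) and recognizing that a loop supplies exactly the adjacency $x\sim x$ needed to keep the second coordinate fixed. This is precisely the mechanism by which odd cycles survive the direct product, and it is what fails when neither factor has a loop.
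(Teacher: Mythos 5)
Your proof is correct and follows exactly the paper's argument: lift the odd cycle of $G$ into $G\times H$ by freezing the second coordinate at the looped vertex $x$, then invoke Lemma~\ref{bipartite}. You simply spell out the adjacency check and distinctness of the lifted vertices in more detail than the paper does.
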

 
 \begin{proof}
 Let $v_1,\dots v_k$ be an odd cycle in $G$ and let $x$ be a vertex with a loop in $H$. Observe that $(v_1,x),(v_2,x), \dots, (v_k,x)$ is an odd cycle in $G \times H$. Since median graphs are bipartite by Lemma \ref{bipartite}, this means that $G \times H$ is not median.
  \qed
 
 \end{proof}
 
 \begin{prop}\label{cycleloop}
 If $G$ is a connected graph containing $P_3$ as a subgraph and $H$ is a connected graph containing $D_k$ as an induced subgraph for some $k>2$, then $G\times H$ is not a median graph.
 
 \end{prop}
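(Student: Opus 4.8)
The plan is to exhibit $K_{2,3}$ as a (not necessarily induced) subgraph of $G\times H$ and then invoke Proposition \ref{K23sub}. I would deliberately avoid hunting for an odd cycle in the product: although $D_k$ is itself non-bipartite (the loop is an odd closed walk), the hypothesis on $G$ is only that it contains $P_3$, and $G$ may well be bipartite. In that case every closed walk of $G\times H$ projects to an even closed walk of $G$, so the product stays bipartite and the parity obstruction of Lemma \ref{bipartite} is unavailable. Hence the $K_{2,3}$ obstruction is the right tool, and the construction must use the loop in an essential way.

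The key structural observation is that the looped vertex of $D_k$ has three distinct neighbors. Write the copy of $P_3$ in $G$ as $a\sim b\sim c$ with $a,b,c$ distinct, and let $x_1,\dots,x_k$ be the vertices of the copy of $D_k$ in $H$, where $x_1$ carries the loop. Then $x_1$ is adjacent in $H$ to each of $x_1$ (via the loop), $x_2$, and $x_k$, and these are three \emph{distinct} vertices precisely because $k>2$, so that $x_2\neq x_k$. This is the step where the hypothesis $k>2$ is consumed, and it is also the conceptual point of the argument: the loop supplies a third neighbor of $x_1$ that a plain cycle $C_k$ could not.

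I would then take the two hubs to be $(a,x_1)$ and $(c,x_1)$ and the three spokes to be $(b,x_1)$, $(b,x_2)$, and $(b,x_k)$. Each spoke is adjacent in $G\times H$ to each hub: the first-coordinate conditions $b\sim a$ and $b\sim c$ hold because of the path, while the second-coordinate conditions $x_1\sim x_1$, $x_2\sim x_1$, and $x_k\sim x_1$ hold because $x_1$ is adjacent to each of $x_1,x_2,x_k$. A short check confirms the six vertices are pairwise distinct: the two hubs differ since $a\neq c$, the three spokes differ since $x_1,x_2,x_k$ are distinct, and hubs differ from spokes in their first coordinate. Thus these six vertices carry a copy of $K_{2,3}$, and Proposition \ref{K23sub} shows $G\times H$ is not median.

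The argument is short and I expect no serious obstacle; the only care needed is the bookkeeping of distinctness, which is exactly what forces $k>2$ (for $k=2$ the two spokes $(b,x_2)$ and $(b,x_k)$ would coincide). It is worth noting that only the three edges $x_1x_1$, $x_1x_2$, and $x_1x_k$ of $D_k$ are used, so the proof would go through under the weaker hypothesis that $D_k$ is merely a subgraph of $H$; the stronger ``induced'' assumption in the statement is not needed here.
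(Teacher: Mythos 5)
Your proof is correct, and it uses the very same five product vertices as the paper's own argument; only the concluding device differs. The paper takes the triple $(v_2,x_1),(v_2,x_2),(v_2,x_k)$, argues (after first dispatching the case of a loop at $v_2$ via Proposition~\ref{loops in G and H}) that these are pairwise at distance $2$, and then exhibits $(v_1,x_1)$ and $(v_3,x_1)$ as two distinct medians. You instead observe that these same five vertices already carry a $K_{2,3}$ with hubs $(v_1,x_1),(v_3,x_1)$ and invoke Proposition~\ref{K23sub}. Your packaging is slightly cleaner: because Proposition~\ref{K23sub} tolerates extra edges, you need no case split for a possible loop at the middle vertex of $P_3$ and no distance computations at all. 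Your closing remarks are also accurate: the ``induced'' hypothesis is not needed (neither proof truly uses it), and since your construction only consumes the edges $x_1x_1$, $x_1x_2$, $x_1x_k$, what you have really proved is the stronger statement that a looped vertex with two further distinct neighbors forces non-medianness --- which is exactly the paper's later Proposition~\ref{H1}, proved there by the identical $K_{2,3}$ configuration.
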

 
 \begin{proof}
Label the vertices of the path $P_3 \subseteq G$ as $v_1,v_2,v_3$, where $v_i \sim v_{i+1}$ for $i=1,2$. Label the vertices of $D_k\subseteq H$ as $x_1,x_2,\dots x_k$, where $x_1 \sim x_1$, $x_1 \sim x_k$, and $x_i \sim x_{i+1}$ for $1\leq i \leq k-1$. Notice that by Proposition \ref{loops in G and H}, if $v_2$ has a loop in $G$, then we are done, so assume that $d_{G\times H}((v_2,x_1),(v_2,x_2))$$=d_{G\times H}((v_2,x_1),(v_2,x_k))$$=d_{G\times H}((v_2,x_2),(v_2,x_k))$\linebreak $=2$. Therefore, $(v_1,x_1)$ and $(v_3,x_1)$ are median vertices for the triple, $(v_2,x_1),(v_2,x_2),(v_2,x_k)$, so that $G\times H$ is not a median graph.

 \begin{tikzpicture}
 \filldraw [black]
(0,1) circle (2pt)
(1,1) circle (2pt)
(2,1) circle (2pt);

\draw (0,1) -- (1,1);
\draw (1,1) -- (2,1);
\coordinate[label=right:$v_1$] (1) at (0,1.25);
\coordinate[label=right:$v_2$] (2) at (1,1.25);
\coordinate[label=right:$v_3$] (3) at (2,1.25);
 
 \node at (3.5,1)
{$\times$};
 
 \filldraw [black]
(5,0) circle (2pt)
(5,1) circle (2pt)
(5,2) circle (2pt);

\draw (5,2) .. controls (5.5,2.5) and (5.5,1.5) .. (5,2);
\draw (5,0) .. controls (4.5,1) .. (5,2);
\draw (5,2) -- (5,1);
\draw (5,0) -- (5,1);
\coordinate[label=left:$z$] (3) at (5,0);
\coordinate[label=left:$y$] (2) at (5,1);
\coordinate[label=left:$x$] (1) at (5,2);
 
\node at (6.5,1)
{$=$};

\draw [black]
(8,1) circle (2pt)
(9,0) circle (2pt)
(10,0) circle (2pt)
(10,1) circle (2pt);

\filldraw [blue]
(9,1) circle (2pt)
(9,2) circle (2pt);
\filldraw [red]
(10,2) circle (3pt)
(8,2) circle (3pt)
(8,0) circle (3pt);

\draw (8,2) --(10,2);
\draw (8,2) --(9,1);
\draw (8,2) -- (9,0);
\draw (9,2) --(8,1);
\draw (9,2) -- (10,1);
\draw (9,2) --(8,0);
\draw (9,2) -- (10,0);
\draw (9,2) --(10,2);
\draw (10,2) -- (9,1);
\draw (10,2) --(9,0);
\draw (8,1) -- (9,0);
\draw (9,1) -- (8,0);
\draw (9,1) --(10,0);
\draw (10,1) -- (9,0);

\node at (10,2.4) {$(v_3,x)$};
\node at (7.4,2) {$(v_1,x)$};
\node at (7.4,0) {$(v_1,z)$};

\end{tikzpicture}
 
 \qed
 \end{proof}

\section{Two Loops}
 In this section we prove the following:
 
 \begin{theo}\label{two loops}
 If $G$ is a connected graph with at least three vertices and $H$ is a connected graph with at least two vertices and contains more than one loop, then the direct product $G\times H$ is not a median graph.
 
 \end{theo}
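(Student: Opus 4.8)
The plan is to reduce $G$ to a convenient form and then split on how the loops of $H$ are arranged, matching the auxiliary graphs $H_1,H_2,H_3,H'_k$ defined above. First I would dispose of two reductions. If $G$ carries a loop, then $G\times H$ is not median by Proposition~\ref{loops in G and H}, so I may assume $G$ is loopless; if $G$ contains an odd cycle, then since $H$ has a loop, Proposition~\ref{odd cycle} finishes the argument, so I may assume $G$ is bipartite. A connected graph on at least three vertices contains $P_3$ as a subgraph, so fix a path $v_1\sim v_2\sim v_3$ in $G$. Throughout I read ``more than one loop'' as ``loops at two distinct vertices,'' say $x$ and $y$; this is the only reading for which the statement can hold (a repeated loop at a single pendant vertex would give a median product).

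Now I would split on the local structure of the looped vertices. \textbf{Case 1: some looped vertex $h$ has two distinct neighbours $h',h''$} (equivalently $H$ contains $H_1$ as a subgraph). Consider the triple $(v_2,h),(v_2,h'),(v_2,h'')$. Because $h$ carries a loop and $h\sim h',\,h\sim h''$, each of $(v_1,h)$ and $(v_3,h)$ is adjacent to all three triple–vertices; and since a $G$–vertex is not adjacent to itself (no loop in $G$), the three triple–vertices are pairwise at distance exactly $2$. Hence both $(v_1,h)$ and $(v_3,h)$ lie on a geodesic between every pair of the triple, giving two distinct medians and contradicting the uniqueness in the definition of a median graph. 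This step is robust: it uses only the loop at $h$ and its two neighbours, not the rest of $H$.

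\textbf{Case 2: every looped vertex is pendant} (degree one). Pick two looped vertices $x,y$, let $d=d_H(x,y)$, and fix a shortest $x$–$y$ path $P\colon x=w_0,w_1,\dots,w_d=y$. If $d=1$ then $x\sim y$ and $H$ contains $H_2$; here $(v_2,x)$ and $(v_2,y)$ share the four common neighbours $(v_1,x),(v_3,x),(v_1,y),(v_3,y)$, so $G\times H$ contains $K_{2,3}$ and is not median by Proposition~\ref{K23sub}. If $d\ge 2$ (so $H$ contains $H_3$ or some $H'_k$), I would exhibit an isometric cycle of length $2(d+1)\ge 6$: starting at $(v_2,x)$, use the loop at $x$ to toggle the $G$–coordinate, walk forward along $P$ alternating between the $v_1$– and $v_2$–layers until reaching $y$, use the loop at $y$ to toggle, and return. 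Since a median graph contains no isometric cycle of length greater than four (see \cite{ImrichKlavzar}), this shows $G\times H$ is not median; equivalently, one transfers non-medianness along the subgraph induced on $V(G)\times\{w_0,\dots,w_d\}$ (which equals $G$ times a path with both ends looped), using that a convex subgraph of a median graph is median.

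The main obstacle is precisely the verification in the last case that the exhibited cycle is \emph{isometric} in $G\times H$ (equivalently that $V(G)\times\{w_0,\dots,w_d\}$ is convex), i.e.\ that no part of $H$ lying off $P$ provides a shortcut. I expect this to follow from two facts: the looped vertices $x,y$ being pendant forces every walk entering or leaving them through $w_1$ and $w_{d-1}$, and $P$ being a geodesic keeps $d_H(w_i,w_j)=|i-j|$, so the only way a shortcut could arise is a parity change produced by an odd cycle of $H$ off $P$. Such configurations should be absorbed by Propositions~\ref{odd cycle} and \ref{cycleloop} applied to $H$ together with a loop, leaving exactly the ``bipartite-away-from-the-loops'' situation in which the parity bookkeeping of walks in the direct product forces the claimed distances.
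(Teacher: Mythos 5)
Your reductions, your Case 1 (a looped vertex with two distinct neighbours yields two medians for $(v_2,h),(v_2,h'),(v_2,h'')$, essentially the $K_{2,3}$ of Proposition \ref{H1}), and your Case 2 with $d=1$ (the $K_{2,3}$ of Proposition \ref{H2}) are all sound. The genuine gap is in Case 2 with $d\geq 2$, and it is not only the isometry verification you flag: the lemma you lean on there is false as stated. A median graph \emph{can} contain an isometric cycle of length greater than four --- the $3$-cube $Q_3$ is median, yet the $6$-cycle through its six middle-level vertices $001,011,010,110,100,101$ is isometric in $Q_3$ (antipodal pairs on the cycle have Hamming distance $3$). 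The correct exclusion is for \emph{convex} cycles, and convexity of your cycle of length $2(d+1)$ is a much stronger claim than isometry; indeed the $Q_3$ example shows exactly the failure mode: the three alternate vertices of an isometric $6$-cycle, pairwise at distance $2$, may still have a median supplied by a vertex off the cycle. Your fallback --- transferring non-medianness along the induced subgraph on $V(G)\times\{w_0,\dots,w_d\}$ --- is also not available as stated: that subgraph need not be convex in $G\times H$ (a shortest path $P$ in $H$ is isometric but generally not convex, and nothing prevents geodesics in the product from using $G$-vertices and $H$-vertices outside your chosen grid), and even granting convexity you would still owe a proof that $G\times H'_{d+1}$ itself is not median, which is the actual substance of the case.

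For comparison, the paper closes exactly this case with Proposition \ref{H3} ($d=2$) and Lemma \ref{H'k} ($d\geq 2$ in general, applied to the shortest $x$--$y$ path, which is isometric by construction). Rather than arguing via forbidden cycles, Lemma \ref{H'k} exhibits a concrete triple, $(v_1,x_1),(v_1,x_k),(v_3,x_k)$, and two distinct vertices $(v_2,x_k)$ and $(v_2,x_{k-1})$ each lying on geodesics between every pair of the triple; the certification that the constructed paths are geodesics is done by a parity argument (Proposition \ref{findoddcycle} plus bipartiteness of median graphs), which sidesteps the need to control all of $H$ off the path. If you want to rescue your outline, you should replace the isometric-cycle step with an explicit two-medians (or no-median) argument of this kind; as written, the $d\geq 2$ case rests on a false statement about median graphs.
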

 
 The following observations exclude the cases when two loops lie close together.


\begin{prop} \label{H2}
If $G$ is a connected graph containing $P_3$ as a subgraph and $H$ is a connected graph containing $H_2$, then $G\times H$ is not a median graph.

\end{prop}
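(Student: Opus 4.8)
The plan is to locate a copy of $K_{2,3}$ as a (not necessarily induced) subgraph of $G\times H$ and then invoke Proposition \ref{K23sub}. First I would fix the path $P_3\subseteq G$ with vertices $v_1,v_2,v_3$ and edges $v_1\sim v_2$, $v_2\sim v_3$, and label the copy of $H_2\subseteq H$ as $x,y$, where $x\sim x$, $y\sim y$, and $x\sim y$. The key observation is that because \emph{both} $x$ and $y$ carry loops and are joined by an edge, each of the three relations $x\sim x$, $y\sim y$, $x\sim y$ is available to lift an edge of $G$ to an edge of the product; this redundancy is precisely what forces an overabundance of edges over a single edge of $G$.

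Concretely, I would take as the two parts $\{(v_2,x),(v_2,y)\}$ and $\{(v_1,x),(v_1,y),(v_3,x)\}$, and then verify the six required cross edges one at a time. For instance $(v_2,x)\sim(v_1,y)$ holds because $v_2\sim v_1$ in $G$ and $x\sim y$ in $H$, while $(v_2,y)\sim(v_1,y)$ holds because $v_2\sim v_1$ and $y\sim y$; the remaining four pairs are checked the same way, each verification using only that $v_1,v_3$ are neighbors of $v_2$ together with one of the three edges of $H_2$. Since $v_1,v_2,v_3$ are distinct and $x\neq y$, the five listed vertices are distinct, so these six edges genuinely exhibit $K_{2,3}$ as a subgraph.

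With the $K_{2,3}$ subgraph in hand, Proposition \ref{K23sub} immediately yields that $G\times H$ is not median, which completes the argument. This matches the style of the accompanying figures, where the relevant six vertices of $P_3\times H_2$ can be highlighted directly.

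I would not expect a serious obstacle, since the argument is entirely structural and avoids any distance computation, unlike the two-median arguments used for Proposition \ref{cycleloop}. The one point meriting care is that Proposition \ref{K23sub} requires only a subgraph, not an induced one: if $v_2$ happens to carry a loop in $G$, then $(v_2,x)\sim(v_2,y)$ appears as an extra edge inside one part, but this removes none of the six cross edges and so does not affect the conclusion. (Alternatively, a loop at $v_2$ places loops in both factors, and Proposition \ref{loops in G and H} applies directly.)
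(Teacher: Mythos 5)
Your proposal is correct and follows essentially the same route as the paper: exhibit a $K_{2,3}$ subgraph of $G\times H$ (with the part of size two being $\{(v_2,x),(v_2,y)\}$) and invoke Proposition \ref{K23sub}. The only difference is the cosmetic choice of the three-vertex part ($\{(v_1,x),(v_1,y),(v_3,x)\}$ versus the paper's $\{(v_1,x),(v_3,x),(v_3,y)\}$), and your edge verifications are all valid.
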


\begin{proof}
Label the vertices of the path $P_3 \subseteq G$ as $v_1,v_2,v_3$, where $v_i \sim v_{i+1}$ for $i=1,2$. Label the vertices of $H_2 \subseteq H$ as $x,y$ where $x \sim y$, $x \sim x$, and $y \sim y$. By Proposition \ref{K23sub}, it is enough to notice that $G \times H$ contains a $K_{2,3}$ subgraph. 

\begin{tikzpicture}
 \filldraw [black]
(0,1) circle (2pt)
(1,1) circle (2pt)
(2,1) circle (2pt);

\draw (0,1) -- (1,1);
\draw (1,1) -- (2,1);
\coordinate[label=right:$v_1$] (1) at (0,1.25);
\coordinate[label=right:$v_2$] (2) at (1,1.25);
\coordinate[label=right:$v_3$] (3) at (2,1.25);
 
 \node at (3.5,1){$\times$};
 
 \filldraw [black]
(5,0) circle (2pt)
(5,2) circle (2pt);

\draw (5,2) .. controls (5.5,2.5) and (5.5,1.5) .. (5,2);
\draw (5,0) .. controls (5.5,.5) and (5.5,-.5) .. (5,0);
\draw (5,0) -- (5,2);
\coordinate[label=left:$y$] (3) at (5,0);
\coordinate[label=left:$x$] (1) at (5,2);
 
\node at (6.5,1) {$=$};

\draw [black]
(8,.5) circle (2pt);

\filldraw [blue]
(9,1.5) circle (2pt)
(9,.5) circle (2pt);

\filldraw [red]
(8,1.5) circle (3pt)
(10,1.5) circle (3pt)
(10,.5) circle (3pt);

\draw (8,1.5) --(10,1.5);
\draw (8,.5) -- (10,.5);
\draw (8,1.5) --(9,.5);
\draw (8,.5) -- (9,1.5);
\draw (9,.5) --(8,1.5);
\draw (9,1.5) -- (10,.5);
\draw (9,.5) --(8,1.5);
\draw (9,1.5) -- (10,.5);
\draw (9,.5) --(10,1.5);
\draw (10,1.5) -- (9,.5);

\node at (10,1.8) {$(v_3,x)$};
\node at (7.4,1.5) {$(v_1,x)$};
\node at (10,.2) {$(v_3,y)$};

\end{tikzpicture}

\qed

\end{proof}

\begin{prop} \label{H3}
If $G$ is a connected graph containing $P_3$ as a subgraph and $H$ is a connected graph containing $H_3$, then $G\times H$ is not a median graph.

\end{prop}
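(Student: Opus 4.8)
The plan is to exhibit a single ``bad'' triple in $G \times H$ and show it cannot have a unique median, handling the general graphs $G$ and $H$ by a short case analysis rather than by citing a forbidden subgraph as in the previous proposition. First I would dispose of the degenerate cases using the results already proved. Since the copy of $H_3$ in $H$ contains a loop, if $G$ also had a loop then Proposition \ref{loops in G and H} would finish immediately, so I may assume $G$ is loopless. Likewise, if the middle vertex $y$ of the copy of $H_3$ carried a loop, then $x$ and $y$ would both be looped and adjacent, giving a copy of $H_2$ in $H$, so Proposition \ref{H2} would apply; hence I may assume $y$ is not looped.

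Label $P_3 \subseteq G$ by $v_1 \sim v_2 \sim v_3$ and $H_3 \subseteq H$ by $x \sim x$, $x \sim y$, $y \sim z$, $z \sim z$, and consider the triple $(v_2,x),(v_2,y),(v_2,z)$. The first key step is to pin down its three pairwise distances in the full product. Because $G$ is loopless these three vertices are pairwise nonadjacent, so every pairwise distance is at least $2$; and the loops at $x$ and $z$, together with $y \in N(x,z)$, furnish the explicit common neighbors $(v_1,x)$, $(v_1,z)$, and $(v_1,y)$ for the three pairs, so every pairwise distance is exactly $2$. Crucially this holds throughout $G \times H$, not merely in $P_3 \times H_3$, since passing to the larger graphs cannot push any of these distances below $2$.

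The second key step is a local computation: when all three pairwise distances equal $2$, any median $m$ must satisfy $d_{G\times H}(p,m)=d_{G\times H}(q,m)=d_{G\times H}(r,m)=1$, i.e.\ $m$ must be adjacent to all three vertices of the triple. Translating, a median of $(v_2,x),(v_2,y),(v_2,z)$ must have the form $(g,h)$ with $g \sim v_2$ and $h$ adjacent to each of $x,y,z$ in $H$. I then split into two cases. If $H$ has no vertex adjacent to all of $x,y,z$, then no median exists and $G\times H$ is not median. If instead some $h_0$ is adjacent to all of $x,y,z$, then $h_0 \neq y$ (as $y$ is not looped), and the vertices $(v_2,x)$ and $(v_2,z)$ acquire at least the four distinct common neighbors $(v_1,y),(v_3,y),(v_1,h_0),(v_3,h_0)$; any three of them exhibit a $K_{2,3}$ subgraph, and Proposition \ref{K23sub} finishes the argument.

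The hard part will be exactly the point where this proof must diverge from the $H_2$ case: unlike $P_3 \times H_2$, the product $P_3 \times H_3$ contains no $K_{2,3}$, so there is no forbidden-subgraph shortcut and the nonexistence of a median is not automatically inherited by the larger graph. The real work is therefore in establishing robustness---verifying that the three pairwise distances remain exactly $2$ in every admissible $G$ and $H$, and showing that the only way a larger $H$ could manufacture a median (a common neighbor of $x$, $y$, and $z$) simultaneously forces a $K_{2,3}$ through $(v_2,x)$ and $(v_2,z)$. The dichotomy in the preceding paragraph is precisely what makes both horns collapse to a non-median conclusion.
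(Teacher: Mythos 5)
Your proof is correct, but it takes a genuinely different route from the paper's. The paper works with the triple $(v_1,x),(v_3,y),(v_1,z)$: after a case analysis on the second coordinate ruling out any vertex $(g,h)$ adjacent to all three, it asserts that $P_3$ and $H_3$ are convex in $G$ and $H$ and finishes ``by inspection'' inside the finite graph $P_3\times H_3$. You instead take the fiber triple $(v_2,x),(v_2,y),(v_2,z)$, pin each pairwise distance at exactly $2$ (the loops at $x$ and $z$ together with $y$ supply common neighbors in the $v_1$-fiber, while looplessness of $G$ --- secured via Proposition \ref{loops in G and H} --- keeps the distances from dropping to $1$), and observe that a median must then be a common neighbor of all three, hence of the form $(g,h_0)$ with $h_0$ adjacent to each of $x,y,z$ in $H$. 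Your dichotomy closes both horns: if no such $h_0$ exists there is no median, and if one exists then $h_0\neq y$ (you correctly reduce to $y$ unlooped via Proposition \ref{H2}), so $(v_2,x)$ and $(v_2,z)$ acquire the four distinct common neighbors $(v_1,y),(v_3,y),(v_1,h_0),(v_3,h_0)$ and Proposition \ref{K23sub} applies. What your version buys is self-containedness and robustness in a larger $H$ --- it avoids the paper's convexity claim and its unverified finite check, and it reuses the $K_{2,3}$ machinery consistently with the neighboring propositions; what the paper's version buys is brevity by leaning on Propositions \ref{odd cycle} and \ref{cycleloop}. In a written-up version you should add the one-line justification that the median cannot be one of the three triple vertices themselves (the three equalities $d(p,m)+d(m,q)=2$ force all three distances from $m$ to equal $1$), which is the step behind ``must be adjacent to all three.''
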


\begin{proof}
Label the vertices of the path $P_3 \subseteq G$ as $v_1,v_2,v_3$, where $v_i \sim v_{i+1}$ for $i=1,2$. Label the vertices of $H_3 \subseteq H$ as $x,y,z$ where $x \sim y \sim z$, $x \sim x$, and $z \sim z$.
By Proposition \ref{odd cycle} $v_1 \not \sim v_3$ and hence $d((v_1,x),(v_3,y))\geq 2$. For the same reason, $d((v_1,z),(v_3,y))\geq 2$.

By Proposition \ref{cycleloop} we may assume $x \not \sim z$ in $H$. Suppose there exist vertices $g \in G$ and $h \in H$ so that $(g,h)\sim (v_1,x)$, $(g,h)\sim (v_3,y)$, $(g,h)\sim (v_1,z)$.
If $h=x$, then $(g,h)\not \sim (v_1,z)$. If $h=y$, then $(g,h)\not \sim (v_3,y)$. If $h=z$, then $(g,h)\not \sim (v_1,x)$. We are left with the case when $h$ is adjacent to $x,y,$ and $z$, but this is impossible by Proposition \ref{cycleloop}. Hence, $P_3$ is convex in $G$ and $H_3$ is convex in $H$.

By inspection, we see that there is no median vertex in $P_3 \times H_3$ for \linebreak 
$(v_1,x),(v_3,y), (v_1,z)$, and by the previous observation, there can be no median in $G\times H$.

\begin{tikzpicture}
 \filldraw [black]
(0,1) circle (2pt)
(1,1) circle (2pt)
(2,1) circle (2pt);

\draw (0,1) -- (1,1);
\draw (1,1) -- (2,1);
\coordinate[label=right:$v_1$] (1) at (0,1.25);
\coordinate[label=right:$v_2$] (2) at (1,1.25);
\coordinate[label=right:$v_3$] (3) at (2,1.25);
 
 \node at (3.5,1)
{$\times$};
 
 \filldraw [black]
(5,0) circle (2pt)
(5,1) circle (2pt)
(5,2) circle (2pt);

\draw (5,2) .. controls (5.5,2.5) and (5.5,1.5) .. (5,2);
\draw (5,0) .. controls (5.5,.5) and (5.5,-.5) .. (5,0);
\draw (5,2) -- (5,1);
\draw (5,0) -- (5,1);
\coordinate[label=left:$z$] (3) at (5,0);
\coordinate[label=left:$y$] (2) at (5,1);
\coordinate[label=left:$x$] (1) at (5,2);
 
\node at (6.5,1)
{$=$};

\draw [black]
(8,1) circle (2pt)
(9,0) circle (2pt)
(10,0) circle (2pt)
(10,2) circle (2pt)
(9,1) circle (2pt)
(9,2) circle (2pt);

\filldraw [red]
(10,1) circle (3pt)
(8,2) circle (3pt)
(8,0) circle (3pt);

\draw (8,2) --(10,2);
\draw (8,0) -- (10,0);
\draw (8,2) --(9,1);
\draw (9,2) --(8,1);
\draw (9,2) -- (10,1);
\draw (9,2) --(10,2);
\draw (10,2) -- (9,1);
\draw (8,1) -- (9,0);
\draw (9,1) -- (8,0);
\draw (9,1) --(10,0);
\draw (10,1) -- (9,0);

\node at (10.2,1.4) {$(v_3,y)$};
\node at (7.4,2) {$(v_1,x)$};
\node at (7.4,0) {$(v_1,z)$};

\end{tikzpicture}

\qed

\end{proof}

\begin{lemma} \label{H'k}
If $G$ is a connected graph containing $P_3$ as a subgraph and $H$ is a connected graph with isometric subgraph $H'_k$ for some $k>1$, then $G\times H$ is not a median graph.

\end{lemma}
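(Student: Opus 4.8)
The plan is to mirror the strategy of Propositions \ref{H2} and \ref{H3}: realize $P_3 \times H'_k$ as a convex subgraph of $G \times H$, and then exhibit inside this product a triple of vertices admitting \emph{two} distinct medians, contradicting the uniqueness demanded by the median property. First I would dispose of the small cases: since $H'_2 = H_2$ and $H'_3 = H_3$, the values $k=2,3$ are already covered by Propositions \ref{H2} and \ref{H3}, so I may assume $k \ge 4$. As in the earlier proofs, label the path in $G$ by $v_1 \sim v_2 \sim v_3$; by Proposition \ref{odd cycle} I may assume $v_1 \not\sim v_3$, and I would then argue exactly as in the convexity paragraph of Proposition \ref{H3} — using Propositions \ref{odd cycle} and \ref{cycleloop} to forbid a vertex of $H$ simultaneously adjacent to the needed vertices of $H'_k$ — that no geodesic between the vertices I choose can leave $P_3 \times H'_k$. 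Combined with the hypothesis that $H'_k$ is isometric in $H$, this makes $P_3 \times H'_k$ convex in $G\times H$, so the medians of my triple computed in the product are exactly its medians in $G \times H$.

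The heart of the argument is to produce the two medians inside $P_3 \times H'_k$. Write $x_1,\dots,x_k$ for the path of $H'_k$, with loops at $x_1$ and $x_k$. I would take the two ends of the ``loop column'' over $x_1$, namely $(v_1,x_1)$ and $(v_3,x_1)$, as two of the three points of the triple; these lie at distance $2$, and the two geodesics between them pass through $(v_2,x_1)$ (using the loop at $x_1$) and through $(v_2,x_2)$ (using the path edge $x_1x_2$), respectively. The whole point is to choose the third vertex $w$, lying on the opposite loop column over $x_k$, so that \emph{both} of $(v_2,x_1)$ and $(v_2,x_2)$ also lie on geodesics to $w$. A short parity computation shows the correct choice is $w=(v_1,x_k)$ when $k$ is even and $w=(v_2,x_k)$ when $k$ is odd: in each case the walk from $(v_1,x_1)$ to $w$ has a parity mismatch between its $G$-coordinate and its $H$-coordinate, forcing exactly one extra step that may be spent either as the loop at $x_1$ (the geodesic through $(v_2,x_1)$) or, after traversing the path, as the loop at $x_k$ (the geodesic through $(v_2,x_2)$). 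Verifying $d((v_2,x_1),w)=d((v_2,x_2),w)=d((v_1,x_1),w)-1=d((v_3,x_1),w)-1=k-1$ then certifies that $(v_2,x_1)$ and $(v_2,x_2)$ are two distinct medians of $\{(v_1,x_1),(v_3,x_1),w\}$, so $G\times H$ is not median.

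I would organize the distance bookkeeping around the basic fact that a walk of length $n$ from $(a,p)$ to $(b,q)$ in a direct product exists precisely when there are walks of length exactly $n$ from $a$ to $b$ in $G$ and from $p$ to $q$ in $H$, so $d$ is the least $n$ simultaneously realizable in both coordinates. Since $P_3$ forces the $G$-coordinate to alternate between the classes $\{v_1,v_3\}$ and $\{v_2\}$ at each step, while the loops at $x_1$ and $x_k$ are exactly what let the parity of the $H$-coordinate be corrected at either end, the indispensability of having \emph{two} loops becomes transparent: with a single loop (the graphs $E_l$, which are median) the forced extra step can be absorbed at only one end, so only one of $(v_2,x_1),(v_2,x_2)$ survives as a median. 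This also recovers, for $k=3$, the failure already seen in Proposition \ref{H3}.

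The step I expect to be the main obstacle is the convexity/isometry reduction, that is, showing rigorously that no median of my triple can escape $P_3\times H'_k$. Because the product metric is not additive in the factor distances, transporting distances is delicate, and I must combine the isometry hypothesis on $H'_k$ with the parity constraints above to exclude shortcuts; this is where Propositions \ref{odd cycle} and \ref{cycleloop} do the real work, just as the convexity paragraph does in Proposition \ref{H3}. Once the correct $w$ is fixed, the remaining verifications reduce to routine parity counts.
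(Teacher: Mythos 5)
Your proposal is correct and is essentially the paper's own argument in mirror image: the paper also produces two distinct medians for a triple with a ``close pair'' on one loop column and a far vertex on the other (it places the close pair over $x_k$ and the candidate medians $(v_2,x_k),(v_2,x_{k-1})$ there, choosing the far endpoint over $x_1$ according to the parity of $k$, whereas you reflect the picture through the symmetry $x_i\leftrightarrow x_{k+1-i}$), with the same ingredients: the isometry of $H'_k$ for the lower bounds, Proposition \ref{odd cycle} (plus Proposition \ref{findoddcycle} in the paper) to dispose of odd cycles, and explicit walks for the upper bounds. The only caveat is that the convexity reduction you flag as the main obstacle is not actually needed -- your direct distance verification in $G\times H$ via the walk characterization already certifies both medians, which is exactly how the paper proceeds.
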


\begin{proof}
Label the vertices of the path $P_3 \subseteq G$ as $v_1,v_2,v_3$, where $v_i \sim v_{i+1}$ for $i=1,2$. Label the vertices of $H'_k \subseteq H$ as $x_1,\dots,x_k$ where $x_1\sim x_1, x_k \sim x_k,$ and $x_i\sim x_{i+1}$ for $1\leq i \leq k-1$. We apply Proposition \ref{odd cycle}, and assume $v_1 \not \sim v_3$. Since $H'_k$ is isometric in $H$, $d_H(x_1,x_k)=k-1$, which means $d_{G\times H}((v_1,x_1),(v_3,x_k))\geq k-1$.\\

\emph{Case 1}: Suppose $k$ is even.\\

Consider the paths 
\[\cP_1=(v_1,x_1)\sim (v_2,x_2)\sim (v_1,x_3) \sim \dots \sim (v_2,x_k) \sim (v_1,x_k)\]
 and 
\[\cP_2=(v_1,x_1)\sim (v_2,x_1) \sim (v_1,x_2) \sim (v_2,x_3) \sim \dots \sim (v_2,x_{k-1}) \sim (v_1,x_k)\]

Observe that $\cP_1$ contains $(v_2,x_{k-1})$ and $\cP_2$ contains $(v_2, x_k)$, and both paths are isometric as they are of length $k-1$.\

Likewise, consider the paths
\[\cP_3=(v_1,x_1)\sim (v_2,x_2)\sim (v_3,x_3) \sim (v_2,x_4) \sim (v_3,x_5) \sim \dots \]\[\sim (v_2,x_k)\sim (v_3,x_k)\]
 and 
\[\cP_4=(v_1,x_1)\sim (v_2,x_1) \sim (v_3,x_2) \sim (v_2,x_3) \sim \dots \sim (v_2,x_{k-1}) \sim (v_3,x_k)\]

Observe that $\cP_3$ contains $(v_2,x_k)$ and $\cP_4$ contains $(v_2, x_{k-1})$, and both paths are isometric as they are of length $k-1$.\

Notice that $d((v_1,x_k),(v_3,x_k))=2$ and that both $(v_2,x_k)$ and $(v_2, x_{k-1})$ lie on geodesics from $(v_1,x_k)$ to $(v_3,x_k)$.
Therefore there is no unique median vertex for $(v_1,x_1),(v_1,x_k),(v_3,x_k)$ and $G \times H$ is not a median graph.\\

\emph{Case 2}: Suppose $k$ is odd.\\

Consider the paths 
\[\cQ_1=(v_2,x_1)\sim (v_1,x_2)\sim (v_2,x_3) \sim \dots \sim (v_2,x_k) \sim (v_1,x_k)\]
 and 
\[\cQ_2=(v_2,x_1)\sim (v_1,x_1) \sim (v_2,x_2) \sim (v_1,x_3) \sim \dots \sim (v_2,x_{k-1}) \sim (v_1,x_k)\]

Observe that $\cQ_1$ contains $(v_2,x_k)$ and $\cQ_2$ contains $(v_2, x_{k-1})$. Both paths are of length $k$. Since $d(x_1,x_k)=k-1$, $d((v_2,x_1),(v_1,x_k))\geq k-1$ and $d((v_2,x_1),(v_3,x_k))\geq k-1$. Suppose there exists a path $\cQ$ from $(v_2,x_1)$ to $(v_1,x_k)$ of length $k-1$. By Proposition \ref{findoddcycle}, $\cQ_1 \cup \cQ$ contains an odd cycle. However, in this case $G \times H$ is not a median graph by Lemma \ref{bipartite}.\

Likewise, consider the paths
\[\cQ_3=(v_2,x_1)\sim (v_3,x_2)\sim (v_2,x_3) \sim \dots \sim (v_2,x_k) \sim (v_3,x_k)\]
 and 
\[\cQ_4=(v_2,x_1)\sim (v_3,x_1) \sim (v_2,x_2) \sim \dots \sim (v_2,x_{k-1}) \sim (v_3,x_k)\]

Observe that $\cQ_3$ contains $(v_2,x_k)$ and $\cQ_4$ contains $(v_2, x_{k-1})$. Both paths are of length $k$ and isometric by the same argument used above for $\cQ_1$ and $\cQ_2$.\

Notice that $d((v_1,x_k),(v_3,x_k))=2$ and that both $(v_2,x_k)$ and $(v_2, x_{k-1})$ lie on geodesics from $(v_1,x_k)$ to $(v_3,x_k)$.
Therefore there is no unique median vertex for $(v_1,x_1),(v_1,x_k),(v_3,x_k)$ and $G \times H$ is not a median graph.\\

\qed

\end{proof}

We are now ready to prove our Theorem.

\vspace{.2 in}

\begin{proof} (Theorem \ref{two loops})
Choose two vertices $x$ and $y$ of $H$ with loops. Label the vertices of the path $P_3 \subseteq G$ as $v_1,v_2,v_3$, where $v_i \sim v_{i+1}$ for $i=1,2$. Let $\cP$ be a shortest path from $x$ to $y$ in $H$. Notice that $\cP$ is isometric in $H$. If $|\cP|$ is odd, then we are done by Proposition \ref{H2} and Lemma \ref{H'k}. If $|\cP|$ is even, then we are done by Proposition \ref{H3} and Lemma \ref{H'k}.

\qed
\end{proof}

\section{One Loop}

The following result limits the placement of loops in $G$.

\begin{prop} \label{H1}
If $G$ is a connected graph containing $P_3$ as a subgraph and $H$ is a connected graph containing $H_1$ as a subgraph, then $G\times H$ is not a median graph.

\end{prop}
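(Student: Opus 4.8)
The plan is to produce a $K_{2,3}$ subgraph of $G\times H$ and conclude via Proposition \ref{K23sub}, exactly in the spirit of the proof of Proposition \ref{H2}. First I would fix the labels: write the copy of $P_3$ in $G$ as $v_1\sim v_2\sim v_3$, and write the copy of $H_1$ in $H$ as the path $x\sim y\sim z$ carrying the loop $y\sim y$ at its center vertex.

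The heart of the argument is the loop at $y$. Because $y\sim y$ in $H$, the vertex $(v_2,y)$ is adjacent in $G\times H$ to both $(v_1,y)$ and $(v_3,y)$ (using $v_2\sim v_1$ and $v_2\sim v_3$ together with $y\sim y$). Moreover, since $y\sim x$ and $y\sim z$ in $H$, both $(v_1,y)$ and $(v_3,y)$ are adjacent to $(v_2,x)$ and to $(v_2,z)$ as well. I would record these six adjacencies and observe that the two vertices $(v_1,y)$ and $(v_3,y)$ are each adjacent to all three of $(v_2,x),(v_2,y),(v_2,z)$. Since $v_1,v_2,v_3$ are distinct and $x,y,z$ are distinct, these five vertices are distinct and the two sets $\{(v_1,y),(v_3,y)\}$ and $\{(v_2,x),(v_2,y),(v_2,z)\}$ are disjoint, so they span a (not necessarily induced) copy of $K_{2,3}$. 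Proposition \ref{K23sub} then immediately gives that $G\times H$ is not median.

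There is essentially no obstacle here: no distance computation or convexity argument is required, and in particular I would \emph{not} need to first reduce to the bipartite case via Proposition \ref{odd cycle}, because Proposition \ref{K23sub} applies to copies of $K_{2,3}$ that need not be induced. The only points to verify carefully are that the five named product-vertices are genuinely distinct and that all six required product-edges are present; both are immediate from the definition of the direct product and of $H_1$. As in the surrounding propositions, I would accompany the argument with the picture of $P_3\times H_1$ highlighting the two parts of the $K_{2,3}$.
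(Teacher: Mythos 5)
Your proof is correct and is essentially identical to the paper's: the authors exhibit exactly the same $K_{2,3}$ on $(v_2,x),(v_2,y),(v_2,z),(v_1,y),(v_3,y)$ and invoke Proposition \ref{K23sub}. You merely spell out the six adjacencies in more detail than the paper does.
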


\begin{proof}
Label the vertices of the path $P_3 \subseteq G$ as $v_1,v_2,v_3$, where $v_i \sim v_{i+1}$ for $i=1,2$. Label the vertices of $H_1 \subseteq H$ as $x,y,z$ where $x \sim y \sim z$, $y \sim y$. By Proposition \ref{K23sub}, it is enough to notice that the subgraph of $G \times H$ on the vertices $(v_2,x),(v_2,y),(v_2,z),(v_1,y),(v_3,y)$ is $K_{2,3}$. 

\begin{tikzpicture}
 \filldraw [black]
(0,1) circle (2pt)
(1,1) circle (2pt)
(2,1) circle (2pt);

\draw (0,1) -- (1,1);
\draw (1,1) -- (2,1);
\coordinate[label=right:$v_1$] (1) at (0,1.25);
\coordinate[label=right:$v_2$] (2) at (1,1.25);
\coordinate[label=right:$v_3$] (3) at (2,1.25);
 
 \node at (3.5,1)
{$\times$};
 
 \filldraw [black]
(5,0) circle (2pt)
(5,1) circle (2pt)
(5,2) circle (2pt);

\draw (5,1) .. controls (5.5,1.5) and (5.5,.5) .. (5,1);
\draw (5,2) -- (5,0);
\coordinate[label=left:$z$] (3) at (5,0);
\coordinate[label=left:$y$] (2) at (5,1);
\coordinate[label=left:$x$] (1) at (5,2);
 
\node at (6.5,1)
{$=$};

\draw [black]
(9,0) circle (2pt)
(10,0) circle (2pt)
(10,2) circle (2pt)

(9,2) circle (2pt)
(8,2) circle (2pt)
(8,0) circle (2pt);

\filldraw [blue]
(8,1) circle (2pt)
(10,1) circle (2pt);

\filldraw [red]
(9,2) circle (3pt)
(9,1) circle (3pt)
(9,0) circle (3pt);

\draw (8,1) --(10,1);
\draw (8,2) --(9,1);
\draw (9,2) --(8,1);
\draw (9,2) -- (10,1);
\draw (10,2) -- (9,1);
\draw (8,1) -- (9,0);
\draw (9,1) -- (8,0);
\draw (9,1) --(10,0);
\draw (10,1) -- (9,0);

\node at (9,2.5) {$(v_2,x)$};
\node at (9,-.5) {$(v_2,z)$};

\end{tikzpicture}

\qed

\end{proof}

The next proposition further limits the structure of median product graphs.

\begin{prop}\label{star}
Let $G$ be a connected graph on at least three vertices and $H$ a connected graph containing $E_3$ as a subgraph. If either $G$ or $H$ contain the star $S_3$ as subgraph, then $G\times H$ is not a median graph.

\end{prop}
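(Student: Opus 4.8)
The plan is to dispatch the two disjuncts separately, reducing each to Proposition \ref{K23sub} by exhibiting a $K_{2,3}$ subgraph of $G\times H$. I will lean on two standing facts. First, since $G$ is connected on at least three vertices it contains a path $v_1\sim v_2\sim v_3$ (some vertex has degree at least two), exactly as is used throughout the earlier propositions. Second, since $H$ contains $E_3$ it has a vertex $x_1$ carrying a loop together with a neighbor $x_2\neq x_1$. Writing $S_3=K_{1,3}$, the whole argument is then a matter of choosing the right ``hub'' vertices for a $K_{2,3}$ in each case.

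\emph{Case 1 ($H\supseteq S_3$).} Record the claw in $H$ as a center $w$ adjacent to distinct vertices $u_1,u_2,u_3$. The idea is to pair the two \emph{ends} of the $G$-path with the star center: I would verify that $(v_1,w)$ and $(v_3,w)$ are each adjacent to all three of $(v_2,u_1),(v_2,u_2),(v_2,u_3)$, which holds because $v_1,v_3\sim v_2$ in $G$ while $w\sim u_j$ in $H$. After checking that these five vertices are distinct (immediate, since $v_1\neq v_3$ and the first coordinates $v_1,v_3$ differ from $v_2$), they form a $K_{2,3}$, and Proposition \ref{K23sub} applies. It is worth noting that this case does not actually invoke $E_3$; that hypothesis is only needed in Case 2.

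\emph{Case 2 ($G\supseteq S_3$).} Record the claw in $G$ as a center $c$ adjacent to distinct vertices $a,b,d$. Here the key move is to use the loop of $E_3$ to let the single vertex $c$ play the hub role in two different $H$-layers: both $(c,x_1)$ and $(c,x_2)$ are adjacent to each of $(a,x_1),(b,x_1),(d,x_1)$, since $c\sim a,b,d$ in $G$, the loop gives $x_1\sim x_1$, and the edge gives $x_2\sim x_1$ in $H$. Once more this produces a $K_{2,3}$, and Proposition \ref{K23sub} finishes the proof.

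The only routine part is the distinctness check on the five vertices in each case. The single genuine idea, and the step I would flag as the crux, is the choice of the two hub vertices for the $K_{2,3}$. In Case 1 these arise naturally as the two endpoints of the $P_3$ in $G$ attached to the claw center in $H$. In Case 2, however, there is no ``second hub'' available inside $G$, so instead one must duplicate the claw center $c$ across the loop layer $x_1$ and its neighbor layer $x_2$ of $H$; recognizing that the loop supplies exactly this duplication is precisely what makes the $E_3$ hypothesis earn its place in the statement.
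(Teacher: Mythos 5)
Your proposal is correct and follows the paper's overall strategy exactly: split into the two disjuncts and exhibit a $K_{2,3}$ subgraph in each, then invoke Proposition \ref{K23sub}. Your Case 1 is the paper's construction verbatim (hubs $(v_1,w),(v_3,w)$, middle vertices $(v_2,u_j)$). Your Case 2 differs in which $K_{2,3}$ you choose: the paper takes hubs $(v_1,x_1),(v_1,x_3)$ and middle vertices $(v_j,x_2)$ for the three claw leaves $v_2,v_3,v_4$, which uses the full path $x_1\sim x_2\sim x_3$ of $E_3$ but never the loop; you instead take hubs $(c,x_1),(c,x_2)$ and middle vertices $(a,x_1),(b,x_1),(d,x_1)$, which uses the loop at $x_1$ together with the single edge $x_1\sim x_2$ and so only needs $E_2\subseteq H$. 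Both constructions are valid; yours isolates the role of the loop more sharply and shows the $E_3$ hypothesis in this case could be weakened to $E_2$, while the paper's version is the one that would survive if the loop were deleted. Your distinctness and adjacency checks go through in both cases.
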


\begin{proof}
Label the vertices of $P_3$ in $G$ by $v_1,v_2,v_3$. Suppose $H$ contains $S_3$ as a subgraph and label the vertices of $S_3$ by $x_1,x_2,x_3,x_4$ with $x_1$ as central vertex. By Proposition \ref{K23sub}, it is enough to notice that the subgraph of $G \times H$ on the vertices $(v_2,x_3), (v_2,x_2), (v_2,x_4), (v_1,x_1), (v_3,x_1)$ is $K_{2,3}$.

\begin{center}

\begin{tikzpicture}
 \filldraw [black]
(0,1) circle (2pt)
(1,1) circle (2pt)
(2,1) circle (2pt);

\draw (0,1) -- (2,1);
\coordinate[label=right:$v_1$] (1) at (0,1.25);
\coordinate[label=right:$v_2$] (2) at (1,1.25);
\coordinate[label=right:$v_3$] (3) at (2,1.25);
 
 \node at (3.5,1)
{$\times$};
 
 \filldraw [black]
(5,2) circle (2pt)
(5,1) circle (2pt)
(5.5,0) circle (2pt)
(4.5,0) circle (2pt);

\draw (5,2) -- (5,1);
\draw (5,1) -- (5.5,0);
\draw (5,1) -- (4.5,0);
\coordinate[label=left:$x_3$] (3) at (4.5,0);
\coordinate[label=left:$x_4$] (4) at (5.5,0);
\coordinate[label=left:$x_2$] (2) at (5,2);
\coordinate[label=left:$x_1$] (1) at (5,1);

\end{tikzpicture}
\end{center}

\begin{center}
\begin{tikzpicture}
 
\node at (6.5,1)
{$=$};

\draw [black]
(9,2.5) circle (2pt)
(11,2.5) circle (2pt)
(10,1.5) circle (2pt)
(7.5,1) circle (2pt)
(9.5,0) circle (2pt)
(10.5,0) circle (2pt)
(12.5,1) circle (2pt);

\filldraw [blue]
(9,1.5) circle (2pt)
(11,1.5) circle (2pt);

\filldraw [red]
(10,2.5) circle (3pt)
(8.5,.5) circle (3pt)
(11.5,.5) circle (3pt);

\draw (9,2.5) -- (10,1.5);
\draw (10,2.5) --(9,1.5);
\draw (11,2.5) --(10,1.5);
\draw (10,2.5) -- (11,1.5);

\draw (10,1.5) -- (7.5,1);
\draw (10,1.5) -- (9.5,0);
\draw (9,1.5) -- (8.5,.5);
\draw (11,1.5) -- (8.5,.5);

\draw (10,1.5) --(10.5,0);
\draw (10,1.5) -- (12.5,1);
\draw (9,1.5) -- (11.5,.5);
\draw (11,1.5) -- (11.5,.5);

\node at (10,3) {$(v_2,x_2)$};
\node at (8,0) {$(v_2,x_3)$};
\node at (12,0) {$(v_2,x_4)$};

\end{tikzpicture}
\end{center}

\vspace{.2 in}

Label the vertices of $E_3$ in $H$ by $x_1,x_2,x_3$. Suppose $G$ contains $S_3$ as a subgraph and label the vertices of $S_3$ by $v_1,v_2,v_3,v_4$ with $v_1$ as central vertex. Again, by Proposition \ref{K23sub}, we see that the subgraph on the vertices $(v_2,x_2),(v_3,x_2),(v_4,x_2),(v_1,x_3),(v_1,x_1)$ is $K_{2,3}$.

\begin{center}

\begin{tikzpicture}
 \filldraw [black]
(0,1.5) circle (2pt)
(1,1.5) circle (2pt)
(2,1.5) circle (2pt)
(.5,1) circle (2pt);

\draw (0,1.5) -- (2,1.5);
\draw (.5,1) -- (1,1.5);
\coordinate[label=right:$v_2$] (2) at (0,1.75);
\coordinate[label=right:$v_1$] (1) at (1,1.75);
\coordinate[label=right:$v_4$] (3) at (2,1.75);
\coordinate[label=right:$v_3$] (4) at (.6,1);
 
 \node at (3.5,1)
{$\times$};
 
 \filldraw [black]
(5,0) circle (2pt)
(5,1) circle (2pt)
(5,2) circle (2pt);

\draw (5,2) .. controls (5.5,2.5) and (5.5,1.5) .. (5,2);
\draw (5,2) -- (5,0);
\coordinate[label=left:$x_1$] (3) at (5,0);
\coordinate[label=left:$x_2$] (2) at (5,1);
\coordinate[label=left:$x_3$] (1) at (5,2);

\end{tikzpicture}
\end{center}
 
\begin{center} 
\begin{tikzpicture}
\node at (6.5,1)
{$=$};

\draw [black]
(8.5,2.5) circle (2pt)
(10.5,2.5) circle (2pt)
(9,2) circle (2pt)
(9.5,1) circle (2pt)
(8.5,-.5) circle (2pt)
(10.5,-.5) circle (2pt)
(9,-1) circle (2pt);

\filldraw [red]
(8.5,1) circle (3pt)
(9,.5) circle (3pt)
(10.5,1) circle (3pt);

\filldraw [blue]
(9.5,2.5) circle (2pt)
(9.5,-.5) circle (2pt);

\draw (8.5,2.5) -- (10.5,2.5);
\draw (9,2) --(9.5,2.5);
\draw (8.5,1) --(9.5,2.5);
\draw (9.5,1) -- (9,2);
\draw (10.5,1) --(9.5,2.5);
\draw (10.5,2.5) --(9.5,1);
\draw (8.5,1) --(9.5,-.5);
\draw (9.5,1) -- (8.5,-.5);
\draw (9.5,1) --(10.5,-.5);
\draw (9.5,1) -- (9,-1);
\draw (9.5,1) -- (8.5,2.5);
\draw (10.5,1) -- (9.5,-.5);
\draw (9.5,2.5) -- (9,.5);
\draw (9,.5) -- (9.5,-.5);

\node at (7.75,1) {$(v_2,x_2)$};
\node at (8,.5) {$(v_3,x_2)$};
\node at (11.25,1) {$(v_4,x_2)$};

\end{tikzpicture}
\end{center}

\qed
\end{proof}

The next result can be immediately deduced from Lemma 2.2 in \cite{BJKZ}.

\begin{prop}\label{cycle}
Let $G$ be a connected graph on at least three vertices and $H$ a connected graph containing $E_3$ as a subgraph. For any integer $k\geq 3$, if either $G$ or $H$ contain the cycle $C_k$ as subgraph, then $G\times H$ is not a median graph.

\end{prop}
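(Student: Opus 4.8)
The plan is to exploit one clean observation: \emph{both} factors contain a $P_3$. Indeed, $G$ is connected on at least three vertices, so it contains $P_3$ as a subgraph, and $H$ contains $E_3$, whose loopless edges $x_1x_2,x_2x_3$ already form a $P_3$ on $\{x_1,x_2,x_3\}$. Consequently, whichever factor carries the cycle $C_k$, the \emph{other} factor carries a $P_3$, and after forgetting the loop at $x_1$ the product $G\times H$ contains a copy of $P_3\times C_k$. First I would record this reduction, since $P_3\times C_k$ is precisely the configuration analysed in Lemma~2.2 of \cite{BJKZ}; the statement then follows at once by applying that lemma with the $P_3$ taken from $G$ when the cycle lies in $H$, and from the path $x_1x_2x_3\subseteq E_3$ when the cycle lies in $G$.

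Before invoking the cited lemma I would dispose of the cheap cases, both to gain intuition and to cover the boundary situations. If the cycle lies in $G$ and $k$ is odd, then $G$ contains an odd cycle while $H$ contains the loop at $x_1$, so $G\times H$ is not median immediately by Proposition~\ref{odd cycle}. If the loop of $H$ happens to lie on a cycle of $H$, i.e. $H$ contains $D_m$ as an induced subgraph, then Proposition~\ref{cycleloop} finishes the argument using $G\supseteq P_3$. Finally, the shortest even case is handled by an explicit forbidden subgraph: in $C_4\times E_3$ the vertices $(g_1,x_2)$ and $(g_3,x_2)$ are nonadjacent yet share the four neighbors $(g_2,x_1),(g_4,x_1),(g_2,x_3),(g_4,x_3)$, giving a $K_{2,3}$, so Proposition~\ref{K23sub} applies.

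The hard part is the general long cycle, especially an even cycle sitting in $H$ with the loop off the cycle, where $G\times H$ is bipartite and no local $K_{2,3}$ appears. Here I would mimic the proof of Lemma~\ref{H'k}: exhibit a triple of pairwise-distance-two vertices lying on the cyclic part of the product (for instance three roughly equally spaced vertices of the $C_k$ copy, which has no median inside $C_k$ since an even cycle of length at least six is not median), and then conclude that it has no median in all of $G\times H$. The genuine obstacle is not finding the triple but controlling distances in the product: one must verify that the relevant alternating paths remain isometric and that the cyclic subgraph is convex in $G\times H$, so that medians cannot escape into the other layers. Establishing this convexity in the product, rather than merely in the cycle factor, is exactly the content secured by Lemma~2.2 of \cite{BJKZ}; accordingly, in the final write-up I would cite that lemma to obtain the conclusion immediately instead of reproving the convexity by hand.
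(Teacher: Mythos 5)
Your argument is correct and ultimately rests on the same foundation as the paper, which proves this proposition purely by citing Lemma~2.2 of \cite{BJKZ}; your reduction to $P_3\times C_k$ (using the $P_3$ inside $E_3$ when the cycle lies in $G$) and your deferral of the convexity/isometry issue to that same lemma match the paper's intent. The additional special cases you dispose of by hand (odd cycles via Proposition~\ref{odd cycle}, loops on cycles via Proposition~\ref{cycleloop}, and the explicit $K_{2,3}$ in $C_4\times E_3$) are correct but not needed once the cited lemma is invoked.
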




The following result can be found in \cite{KlavzarSkrekovki}.

\begin{theo}\label{KS}

A connected grid graph with $n$ vertices and $m$ edges is a median graph if and only if it contains $m-n+1$ squares.
\end{theo}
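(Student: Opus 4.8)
The plan is to first recast the numerical condition topologically, and then prove the two implications using standard median-graph structure theory. I would begin by observing that $m-n+1$ is the cyclomatic number of the connected graph $G$, i.e. the dimension of its cycle space. Drawing $G$ in the plane in the obvious lattice way and applying Euler's formula, the number of bounded faces is exactly $m-n+1$, and these face boundaries generate the cycle space. Since the only $4$-cycles in a grid graph are unit cells, a \emph{square} is precisely a bounded face of length $4$; as the squares form part of a basis coming from the faces, they are independent, so $G$ has at most $m-n+1$ squares, with equality if and only if \emph{every} bounded face has length $4$. Hence the hypothesis that $G$ contains $m-n+1$ squares is equivalent to $G$ having no hole, i.e. $G$ is a simply connected union of unit cells, and the theorem reduces to: a connected grid graph is median if and only if it is hole-free.

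For the direction hole-free $\Rightarrow$ median I would induct using Mulder's convex expansion theorem, that a graph is median exactly when it is obtainable from $K_1$ by a sequence of convex expansions. The base case is a single cell $C_4$, which is median. For the inductive step I would peel off the cells meeting an extreme grid line $\ell$: the unit edges crossing $\ell$ form a single $\Theta$-class, and cutting along it exhibits $G$ as a convex expansion of the smaller hole-free grid graph obtained by contracting that class. Hole-freeness enters exactly in checking that the two sides of the cut are convex, since with no bounded face larger than a square a geodesic between two vertices on one side never has to cross $\ell$. As a consistency check, the full grid $P_a\Box P_b$ is median, being a Cartesian product of the median graphs $P_a$ and $P_b$, and the construction realizes any hole-free $G$ from such a grid.

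For median $\Rightarrow$ hole-free I would invoke the structural fact that in any median graph the $4$-cycles generate the cycle space and every isometric cycle is a $4$-cycle; since the $4$-cycles of a grid graph are precisely its squares, the squares generate the cycle space, forcing $m-n+1$ of them by the first step, so $G$ is hole-free. A more elementary alternative avoids this machinery: were there a hole, an innermost one has a boundary cycle $C$ of length at least $6$ that is isometric in $G$ (no vertex lies inside the hole, so any path between two vertices of $C$ is at least as long as the shorter arc, using that the grid carries the $\ell_1$ metric), and a direct median computation on $C$, exactly as in the non-medianness of $C_6$, yields a triple with no unique median, a contradiction.

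The two places where real work is needed are the convexity check in the expansion step and the isometry of the innermost hole boundary, which are exactly where hole-freeness is used. The hard part will be the convex-expansion bookkeeping of the second paragraph, since it requires converting the intuitive ``no holes'' condition into a clean convex decomposition of $G$ along grid lines; the isometry argument in the hands-on version of the third paragraph is the secondary delicate point, as one must rule out shortcuts through the solid part near concave corners of the hole.
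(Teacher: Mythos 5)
First, note that the paper offers no proof of Theorem \ref{KS}: it is imported directly from \cite{KlavzarSkrekovki}, so your argument must stand on its own. Your opening reduction is sound, and your main argument for the ``median implies $m-n+1$ squares'' direction works: by Euler's formula the bounded faces number $m-n+1$, every square of a grid graph is a bounded face (hence the squares are linearly independent in the cycle space), and the squares of a median graph do generate its cycle space (provable by induction along the expansion procedure), which forces equality. Two of your supporting claims are nevertheless false. An isometric cycle of a median graph need not be a $4$-cycle ($Q_3$ contains an isometric $6$-cycle), though you do not actually need this. More seriously, the boundary cycle of an innermost hole need not be isometric: if the hole is U-shaped and encloses a solid notch, the two base corners of the notch are adjacent in $G$ but at distance $5$ along the hole's boundary cycle, so your ``elementary alternative'' collapses exactly at the point you flagged as delicate.

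The genuine gap is in the converse. Your inductive step asserts that the unit edges crossing an extreme grid line $\ell$ form a single $\Theta$-class whose contraction exhibits $G$ as a convex expansion of a smaller hole-free grid graph. This is false. Take the C-shaped polyomino consisting of the five unit cells $[0,1]\times[j,j+1]$ for $j=0,1,2$ together with $[1,2]\times[0,1]$ and $[1,2]\times[2,3]$, the edge set being the union of the cell boundaries. Every bounded face is a square (so the graph is median and the induction must handle it), yet the four horizontal edges entering the last column split into two $\Theta$-classes: for the edges $(1,1)(2,1)$ and $(1,2)(2,2)$ one computes $d((1,1),(1,2))+d((2,1),(2,2))=1+3=2+2=d((1,1),(2,2))+d((2,1),(1,2))$, so they are not $\Theta$-related. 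Moreover the vertex set of the last column induces a disconnected subgraph, so it cannot serve as one side of an expansion. The peeling must be organized more finely --- one $\Theta$-class (equivalently one maximal run of cells along $\ell$) at a time, or one convex corner cell at a time, after first decomposing $G$ into blocks at cut vertices --- and verifying convexity of the two sides for such a decomposition is the actual content of the theorem, not bookkeeping. As written, the step fails.
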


\begin{corol} \label{one loop}
For any integers $k\geq 3$ and $l\geq 2$, $P_k\times E_l$ is a median graph.
\end{corol}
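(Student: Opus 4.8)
The plan is to realize $P_k\times E_l$ as a connected grid graph and then invoke Theorem~\ref{KS}; everything then reduces to identifying the right grid region and counting vertices, edges, and squares. Write $V(P_k)=\{v_1,\dots,v_k\}$ and $V(E_l)=\{x_1,\dots,x_l\}$, with the loop at $x_1$. Because $x_1$ is the only looped vertex, the edges of $P_k\times E_l$ are of exactly two kinds: the \emph{diagonal} edges $(v_i,x_j)\sim(v_{i\pm1},x_{j\pm1})$ arising from ordinary path-edges of $E_l$, and the \emph{row} edges $(v_i,x_1)\sim(v_{i+1},x_1)$ arising from the loop. Since $E_l$ contains a loop it is non-bipartite, so $P_k\times E_l$ is connected by the standard connectedness criterion for direct products.

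The key step is a change to rotated coordinates under which the diagonal adjacency becomes an axis-parallel grid adjacency. Concretely, I would exhibit $E_l$ as the quotient of the path $P_{2l}$ (on vertices $1,\dots,2l$) by the reflection fixing no vertex and folding the central edge onto the loop at $x_1$; the folding map $\varphi\colon P_{2l}\to E_l$ is a two-fold covering. Then $\mathrm{id}\times\varphi\colon P_k\times P_{2l}\to P_k\times E_l$ is a two-fold covering, and since $P_k$ and $P_{2l}$ are both bipartite, $P_k\times P_{2l}$ has exactly two components, each of which I expect to map isomorphically onto $P_k\times E_l$. A component of a product of two paths is, after a $45^\circ$ rotation, the induced subgraph of $\Z^2$ on the lattice-convex parallelogram $R=\{(X,Y)\in\Z^2:\ 1\le X-Y\le k,\ 1\le X+Y\le 2l\}$, where a vertex is recorded by $X-Y=i$ and $X+Y=t$ with $t$ the appropriate lift of $j$. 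Under this identification the diagonal edges and, crucially, the loop-induced row edges all become unit grid edges, so $P_k\times E_l$ is (isomorphic to) the full induced grid graph on $R$.

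It then remains to count and apply Theorem~\ref{KS}. Here $n=|R|=kl$ and a direct tally gives $m=(k-1)(2l-1)$ edges. A unit square with lower corner $(X,Y)$ lies in $R$ exactly when $2\le X-Y\le k-1$ and $1\le X+Y\le 2l-2$, and counting such lattice points yields $(k-2)(l-1)$ squares. Since $(k-2)(l-1)=(k-1)(2l-1)-kl+1=m-n+1$, Theorem~\ref{KS} shows $P_k\times E_l$ is a median graph. The main obstacle is the middle step: verifying rigorously that the coordinate change is a graph isomorphism onto the induced subgraph on $R$ --- in particular that the loop-row edges land on genuine unit edges (a short parity computation shows the two endpoints of a row edge sit at adjacent lattice points) and that $R$ carries \emph{all} the unit edges among its points and has no holes, which is exactly what makes the square count match $m-n+1$.
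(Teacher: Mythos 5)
Your proposal is correct and follows essentially the same route as the paper: realize $P_k\times E_l$ as a connected grid graph, count $n=kl$, $m=(k-1)(2l-1)$, and $(k-2)(l-1)$ squares, and invoke Theorem~\ref{KS}. The only difference is presentational --- you justify the grid realization via the two-fold cover $P_k\times P_{2l}\to P_k\times E_l$ and a $45^\circ$ rotation of one component, which is a more rigorous account of the ``unfolding'' the paper carries out by direct description of vertex placement.
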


\begin{proof}
Label the vertices of $P_k$ by $v_1, \dots, v_k$ and those of $E_l$ by $x_1,\dots, x_l$. There are $2k-2$ edges incident to vertices with an $x_i$ entry in the second coordinate and those with $x_{i+1}$ in the second coordinate, for $1\leq i \leq l-1$. Summing over all $i$, we get $(l-1)(2k-2)$. We add this sum to the $k-1$ edges between vertices with an $x_1$ in the second entry in the second coordinate, and find the total number of edges is $m=(2l-1)(k-1)$. To find the number of squares we ``unfold" the graph by drawing the vertices as follows:

 Call the vertices with index $1$ in the second coordinate and odd sum of indices of entries in both coordinates, the \emph{middle row}. Place the vertices of the middle row from left to right, increasing by the index of entries in the first coordinate. If the sum of indices of the entries of both coordinates is odd, then we place them above the middle row, increasing from the middle row up with the value of the index in the second coordinate and increasing from left to right with the value of the index in the first coordinate. If the sum of indices of the entries of both coordinates is even, then we place them below the middle row, increasing from the middle row down with the value of the index in the second coordinate and increasing from left to right with the value of the index in the first coordinate.

\vspace{.2 in}

\begin{center}
\begin{tikzpicture}

\filldraw [black]
(1,1) circle (2pt)
(2,0) circle (2pt)
(3,0) circle (2pt)
(3,1) circle (2pt)
(2,1) circle (2pt)
(2,2) circle (2pt)
(3,2) circle (2pt)
(1,2) circle (2pt)
(1,0) circle (2pt);

\draw (1,2) --(3,2);
\draw (1,2) --(2,1);
\draw (2,2) --(1,1);
\draw (2,2) -- (3,1);
\draw (2,2) --(3,2);
\draw (3,2) -- (2,1);
\draw (1,1) -- (2,0);
\draw (2,1) -- (1,0);
\draw (2,1) --(3,0);
\draw (3,1) -- (2,0);

\node at (3.7,2) {$(v_3,x_1)$};
\node at (0.3,2) {$(v_1,x_1)$};
\node at (-1,2.25) {\tiny\emph{Middle Row}};
\node at (3.7,1) {$(v_3,x_2)$};
\node at (.3,1) {$(v_1,x_2)$};
\node at (0.3,0) {$(v_1,x_3)$};
\node at (3.7,0) {$(v_3,x_3)$};

\end{tikzpicture}
\end{center}

\begin{center}
\begin{tikzpicture}

\node at (5,1) {$\longrightarrow$};

\filldraw [black]
(8,3) circle (2pt)
(7,2) circle (2pt)
(9,2) circle (2pt)
(8,1) circle (2pt)
(7,0) circle (2pt)
(9,0) circle (2pt)
(8,-1) circle (2pt)
(7,-2) circle (2pt)
(9,-2) circle (2pt);

\draw (8,3) -- (7,2);
\draw (8,3) -- (9,2);
\draw (7,2) -- (8,1);
\draw (9,2) -- (8,1);
\draw (8,1) -- (7,0);
\draw (8,1) -- (9,0);
\draw (7,0) -- (8,-1);
\draw (9,0) -- (8,-1);
\draw (8,-1) -- (7,-2);
\draw (8,-1) -- (9,-2);

\node at (8,.25) {\tiny\emph{Middle Row}};
\node at (6.3,0) {$(v_1,x_1)$};
\node at (9.7,0) {$(v_3,x_1)$};
\node at (6.3,2) {$(v_1,x_2)$};
\node at (9.7,2) {$(v_3,x_2)$};
\node at (6.3,-2) {$(v_1,x_3)$};
\node at (9.7,-2) {$(v_3,x_3)$};

\end{tikzpicture}
\end{center}

\vspace{.2 in}

This representation easily allows us to count the number of squares as $(k-2) \times (l-1)$, and since the number of squares is equal to $m-n+1$, we are done by Theorem \ref{KS}.

\qed

\end{proof}

\begin{theo}
If $G$ is either a connected graph on at least three vertices or a connected graph with at least one loop, and $H$ is a connected graph on at least two vertices with at least one loop, then the only median graphs of the form $G\times H$ occurs when $G=P_k$ and $H = P_l$ with a loop at an end vertex, for any integers $k\geq 3, l\geq 2$.
\end{theo}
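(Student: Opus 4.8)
The plan is to prove both directions, with sufficiency being immediate and the work concentrated in necessity. For sufficiency, if $G=P_k$ with $k\ge 3$ and $H=E_l$ with $l\ge 2$, then $G\times H=P_k\times E_l$ is median by Corollary \ref{one loop}, so nothing remains. The substance is the forward direction: assuming $G\times H$ is median, I would cut $G$ and $H$ down to $P_k$ and $E_l$ by applying the obstructions already established, in the order: loops in $G$, odd cycles in $G$, the number and location of loops in $H$, forbidden stars, and forbidden cycles.

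First I would dispose of loops in $G$. If $G$ has a loop and at least two vertices, then since $H$ also carries a loop, Proposition \ref{loops in G and H} gives that $G\times H$ is not median. If instead $G$ is a single looped vertex, then $G\times H$ is isomorphic to $H$ as a graph with loops; since $H$ has a loop it contains a length-one odd cycle, so $G\times H$ is not bipartite and hence not median by Lemma \ref{bipartite}. Thus a median product forces $G$ to be loopless, and by the hypothesis on $G$ this places $G$ in the ``connected on at least three vertices'' case. In particular $G$ contains $P_3$ as a subgraph, which is exactly the hypothesis the remaining propositions require, and Proposition \ref{odd cycle} then shows $G$ has no odd cycle, so $G$ is bipartite.

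Next I would pin down $H$. Since $G$ has at least three vertices, Theorem \ref{two loops} rules out more than one loop in $H$, so $H$ has exactly one loop. Were that loop at a vertex of degree at least two, $H$ would contain $H_1$, contradicting Proposition \ref{H1}; hence the looped vertex has degree one, i.e.\ the loop lies at an end vertex. If $|H|\ge 3$, then following the single edge out of the looped leaf one step further exhibits $E_3$ as a subgraph of $H$, so Propositions \ref{star} and \ref{cycle} apply and forbid $S_3=K_{1,3}$ and every cycle $C_k$ in either factor. Together with connectedness and bipartiteness, this forces both factors to have maximum degree at most two and to be acyclic, so $G$ is a path $P_k$ with $k\ge 3$ and $H$ is a path carrying a loop at one end, i.e.\ $H=E_l$.

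The remaining obstacle is the boundary case $|H|=2$, where $H=E_2$ and Propositions \ref{star} and \ref{cycle} do not literally apply since $E_3\not\subseteq H$; here I would argue directly, imitating those proofs. If $G$ contained $S_3$ with center $v_1$ and leaves $v_2,v_3,v_4$, then the loop at the end vertex $x_1$ of $E_2$ makes $(v_1,x_1)$ and $(v_1,x_2)$ both adjacent to $(v_2,x_1),(v_3,x_1),(v_4,x_1)$, a $K_{2,3}$, excluded by Proposition \ref{K23sub}; so $G$ again has maximum degree at most two and is therefore a path or a cycle, with the odd case removed by Proposition \ref{odd cycle}. To exclude an even cycle I would compute directly in $C_{2m}\times E_2$, where the loop makes distances among the $a$-layer vertices agree with those in $C_{2m}$, and then exhibit a triple whose common neighborhood is empty and which therefore has no median. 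This even-cycle verification is the step I expect to demand the most care, since the short $K_{2,3}$ witness that settles $C_4$ must be replaced by a genuine no-median triple for longer even cycles. Once $G=P_k$ and $H=E_l$ are forced in both the $|H|\ge 3$ and $|H|=2$ cases, the theorem follows from Corollary \ref{one loop}.
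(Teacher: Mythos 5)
Your proposal follows the same route as the paper's own proof: eliminate loops and odd cycles in $G$, invoke Theorem \ref{two loops} to reduce to one loop in $H$, use Proposition \ref{H1} to force that loop onto an end vertex, use Propositions \ref{star} and \ref{cycle} to kill high-degree vertices and cycles in either factor, and finish with Corollary \ref{one loop}. So in outline you have reproduced the paper's argument. What you have added, and added correctly, is attention to two boundary cases that the paper's proof passes over in silence: the case where $G$ is a single looped vertex (so that Proposition \ref{loops in G and H}, which needs two vertices in each factor, does not literally apply), and more substantially the case $|H|=2$, i.e.\ $H=E_2$, where Propositions \ref{star} and \ref{cycle} cannot be invoked because their hypothesis $E_3\subseteq H$ fails. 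Your direct $K_{2,3}$ witness for $S_3\subseteq G$ against $E_2$ is correct and is exactly the right patch via Proposition \ref{K23sub}. The one piece you leave genuinely unfinished is the exclusion of even cycles in $G$ when $H=E_2$: you promise a triple with no median in $C_{2m}\times E_2$ but do not produce it. Such a triple exists --- for instance $(v_1,x_2),(v_3,x_2),(v_5,x_2)$ are pairwise at distance $2$ and have no common neighbor, since the neighbors of $(v_i,x_2)$ are only $(v_{i-1},x_1)$ and $(v_{i+1},x_1)$ --- so the gap is fillable, but as written that step is a sketch rather than a proof. With that computation supplied, your version is in fact more complete than the paper's.
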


\begin{proof}
If $H$ has two loops, then it is not median by Theorem \ref{two loops}. If $H$ has one loop, say at vertex $v$, then by Proposition \ref{H1}, $deg(v)\leq 3$, and $v$ is an end vertex. If $v$ lies on a path to a vertex of degree at least three, then by Proposition \ref{star}, $G\times H$ is not median. If $G$ has a loop, then by Proposition \ref{loops in G and H}, $G\times H$ is not median. If $G$ has a vertex of degree at least three, then by Proposition \ref{star}, $G\times H$ is not median. If either $G$ or $H$ have cycles, then $G\times H$ is not median by Proposition \ref{cycle}. By Corollary \ref{one loop}, the theorem is proved.

\qed
\end{proof}

The remaining classifications of median product graphs, $G\times H$, consist of the case when $G=P_2$, which was partly done in \cite{BJKZ}. A possible future direction could be in attempting to solve the above problem with the added assumption that $H$ contains at least one loop.

\section{Acknowledgements}

The authors would like to thank the anonymous referee for the helpful comments.

\bibliographystyle{plain}
\bibliography{median}

 \end{document}